\documentclass[a4paper,12pt]{amsart}
\usepackage{amsmath,amsthm,amssymb,mathtools,mathrsfs}
\usepackage{bm}
\usepackage{tikz-cd}
\usepackage{xcolor}
\usepackage{hyperref}
    \definecolor{urlcolor}{rgb}{0,0,0}
    \definecolor{linkcolor}{rgb}{.7,0.10,0.2}
    \definecolor{citecolor}{rgb}{.12,.54,.11}
\hypersetup{
      breaklinks=true, 
      colorlinks=true,
      urlcolor=urlcolor,
      linkcolor=linkcolor,
      citecolor=citecolor,
}
\usepackage{cleveref}
\usepackage{setspace}
\usepackage{orcidlink}

\usepackage[shortlabels]{enumitem}
\usepackage[utf8]{inputenc}
\numberwithin{equation}{section}
\usepackage[top=4cm,
bottom=4cm,
left=2.5cm,
right=2.5cm]{geometry}

\usepackage[
style=alphabetic,
doi=false,
isbn=false,
url=false,
eprint=false,
block = space,
maxcitenames=100,
maxbibnames=100,
sorting=anyt]{biblatex} 

\newtheorem{theorem}{Theorem}[section]
\newtheorem{corollary}[theorem]{Corollary}
\newtheorem{proposition}[theorem]{Proposition}
\newtheorem{proposition-definition}[theorem]{Proposition-Definition}
\newtheorem{lemma}[theorem]{Lemma}

\theoremstyle{definition}

\newtheorem{theorem-definition}[theorem]{Theorem-Definition}

\theoremstyle{remark}

\renewcommand{\geq}{\geqslant}
\renewcommand{\leq}{\leqslant}
\renewcommand{\subset}{\subseteq}

\newcommand{\BD}{\mathbb{D}}

\newcommand{\BoA}{\mathbf{A}}
\newcommand{\BoC}{\mathbf{C}}

\newcommand{\BoN}{\mathbf{N}}

\newcommand{\BoQ}{\mathbf{Q}}

\newcommand{\BoZ}{\mathbf{Z}}

\newcommand{\CA}{\mathcal{A}}

\newcommand{\CC}{\mathcal{C}}
\newcommand{\CD}{\mathcal{D}}

\newcommand{\CF}{\mathcal{F}}
\newcommand{\CG}{\mathcal{G}}

\newcommand{\CM}{\mathcal{M}}

\newcommand{\SA}{\mathscr{A}}

\newcommand{\BPS}{\mathcal{BPS}}

\newcommand{\rmb}{\mathrm{b}}
\newcommand{\rmc}{\mathrm{c}}

\newcommand{\cms}{/\!\!/}

\newcommand{\diag}{\mathrm{diag}}

\newcommand{\Fg}{\mathfrak{g}}
\newcommand{\GL}{\mathrm{GL}}

\newcommand{\Frac}{\mathrm{Frac}}

\newcommand{\Gr}{\mathrm{Gr}}
\newcommand{\HO}{\mathrm{H}}
\newcommand{\Hom}{\mathrm{Hom}}

\newcommand{\id}{\mathrm{id}}

\newcommand{\JH}{\mathtt{JH}}

\newcommand{\nil}{\mathrm{nil}}

\newcommand{\Perv}{\mathrm{Perv}}

\newcommand{\pt}{\mathrm{pt}}

\newcommand{\rmBM}{\mathrm{BM}}

\newcommand{\Prim}{\mathrm{Prim}}

\newcommand{\Span}{\mathrm{Span}}

\newcommand{\Sym}{\mathrm{Sym}}

\newcommand{\SN}{\mathcal{SN}}

\title{Commutativity of nilpotent cohomological Hall algebras of $\BoA^2$}
\date{\today}

\author{Lucien Hennecart}

\address{Laboratoire Ami\'enois de Math\'ematique Fondamentale et Appliqu\'ee, CNRS UMR 7352, Universit\'e de Picardie Jules Verne, 33 rue Saint Leu, 80000 Amiens, France}\email{lucien.hennecart@u-picardie.fr}

\begin{document}

\begin{abstract}
In this paper, we prove that both the seminilpotent and the fully nilpotent cohomological Hall algebras (CoHAs) of $\mathbf{A}^2$ are commutative. This result is in strong contrast with the CoHA of $\BoA^2$ without nilpotency conditions, previously studied by Davison, which is related to the Lie algebra $W_{1+\infty}$ of differential operators on $\BoC^*$. The latter is highly noncommutative. Our proof combines two constraints on the Lie bracket on the affinized BPS Lie algebra of a quiver: it is filtered with respect to the perverse filtration and it is graded with respect to the cohomological degree. In the case of the Jordan quiver and nilpotent CoHAs, these constraints force the Lie bracket to vanish. We also describe the equivariant nilpotent CoHAs in the presence of the action of a one-dimensional torus rescaling the first coordinate of $\BoA^2$ with weight $1$ and the second with weight $-1$. In this case, one obtains enveloping algebras of Rees Lie algebras associated with the nilpotent and the seminilpotent filtrations on the Lie algebra $W_{1+\infty}^+$, reminiscent of the description of the equivariant non-nilpotent CoHA given by Davison.
\end{abstract}

\keywords{Geometric representation theory, cohomological Hall algebras, BPS Lie algebras, commuting variety}
\subjclass[2020]{Primary 16G20; Secondary 14N35, 14F43}
\maketitle

\setcounter{tocdepth}{1}
\tableofcontents

\section{Introduction}
\subsection{Background}
The cohomological Hall algebra (CoHA) of $\BoA^2$ (equivalently, of the preprojective algebra of the Jordan quiver) was first considered in \cite{schiffmann2013cherednik} in connection with the AGT conjectures. This is a particular case of CoHAs of preprojective algebras of quivers \cite{yang2018cohomological}. CoHAs have since become central objects, with applications in representation theory, enumerative geometry and algebraic geometry with descriptions of the cohomology of quiver varieties \cite{davison2023bps}, proof of the $\chi$-independence conjecture of Toda \cite{toda2023gopakumar} for symplectic surfaces \cite{davison2026hecke}, and proof of the P=W conjecture \cite{hausel2022p}, to mention only a partial and non-exhaustive list.

CoHAs of quivers with potentials were defined around the same time as the CoHA of $\BoA^2$, by Kontsevich and Soibelman in \cite{kontsevich2011cohomological}, in order to give a mathematical meaning to the physics expectation predicting the existence of an algebra of BPS states \cite{harvey1998algebras}. The CoHA of $\BoA^2$, and more generally the CoHA of the preprojective algebra of any quiver, may be viewed as a CoHA of a quiver with potential via dimensional reduction \cite[Appendix A]{davison2017critical}, \cite[Appendix by Ben Davison]{ren2017cohomological}, see also \cite{yang2020two}.

Despite their many applications, the computation of the algebraic structure of CoHAs remains a very challenging question. These algebras are fully described by generators and relations in only a few cases. One can mention the examples of finite type quivers, the Jordan quiver, and more generally cyclic quivers \cite{davison2022affine,jindal2024coha}.

CoHAs come in several variants given by imposing various flavours of nilpotent conditions (see \cite[\S1.1]{bozec2020number} for a list). In this paper, we are interested in the two possible nilpotent versions of the cohomological Hall algebra of $\BoA^2$, namely the fully nilpotent and the seminilpotent CoHA, corresponding respectively to finite length sheaves on $\BoA^2$ set-theoretically supported on the origin or on the horizontal axis of $\BoA^2$.

The CoHA of $\BoA^2$ without nilpotency conditions, $\CA_{\BoA^2}$, was studied by Davison in \cite{davison2022affine}. It is governed by a degeneration of the Lie algebra $W_{1+\infty}$ (\Cref{subsubsection:the_equivariant_deformation}). We briefly recall its structure. Let $\Gr^F_{\bullet} W_{1+\infty}^+\coloneqq\Span_{\BoQ}\{z^mD^n\colon m\geq 1, n\geq 0\}$ with the Lie bracket $[z^mD^a,z^{n}D^{b}]=(an-bm)z^{m+n}D^{a+b-1}$. The presence of the associated graded $\Gr^F_{\bullet}$ means that the bracket is the degeneration of a bracket on $W_{1+\infty}^+$ with respect to a filtration $F$, which we do not recall here. Then, one of the main results of \cite{davison2022affine} is that there is an isomorphism of graded algebras $\CA_{\BoA^2}\cong\mathbf{U}(\Gr^F_{\bullet}W_{1+\infty}^+)$.

The case of $\BoA^2$ is then a particular case of CoHAs of finite length sheaves on surfaces, which were studied first for cotangent bundles of curves by Minets in \cite{minets2020cohomological}, and then described in full generality by Mellit, Minets, Schiffmann and Vasserot in \cite{mellit2023coherent}.

The nilpotent CoHAs of finite length sheaves on surfaces have recently been studied in \cite{diaconescu2025nilpotent,diaconescu2026cohomological}.

Despite all these results, nilpotent versions of CoHAs are not yet fully understood, and this paper gives this description for $\BoA^2$, which happens to be simpler than the CoHA of $\BoA^2$ studied in \cite{davison2022affine}: they are commutative algebras.

\subsection{Main results}

\subsubsection{Commutativity of nilpotent cohomological Hall algebras of $\BoA^2$}
We let $\CA_{\BoA^2}$ be the cohomological Hall algebra (CoHA) of $\BoA^2$ (\Cref{section:CoHA-A2}), $\CA_{\BoA^2}^{\nil}$ the \emph{fully nilpotent CoHA} of $\BoA^2$ and $\CA_{\BoA^2}^{\SN}$ the \emph{seminilpotent CoHA} of $\BoA^2$ (\Cref{section:nilpotent-cohas}). The underlying vector spaces of these algebras are the Borel--Moore homologies
\[
 \CA_{\BoA^2}=\bigoplus_{d\in\BoN}\HO^{\rmBM}_*(\CC(\mathfrak{gl}_d),\BoQ),\hspace{0.2cm}
 \CA_{\BoA^2}^{\nil}=\bigoplus_{d\in\BoN}\HO^{\rmBM}_*(\CC^{\nil}(\mathfrak{gl}_d),\BoQ),\hspace{0.2cm} \CA_{\BoA^2}^{\SN}=\bigoplus_{d\in\BoN}\HO^{\rmBM}_*(\CC^{\SN}(\mathfrak{gl}_d),\BoQ)
\]
where $\CC(\mathfrak{gl}_d)$ is the commuting stack of $\mathfrak{gl}_d$ (pairs of commuting $d\times d$ matrices), $\CC^{\nil}(\mathfrak{gl}_d)$ the nilpotent commuting stack (pairs of commuting nilpotent $d\times d$ matrices) and $\CC^{\SN}(\mathfrak{gl}_d)$ the seminilpotent stack (pairs of commuting $d\times d$ matrices of which the second is nilpotent). Note that the customary shift on the cohomological degree that is necessary for general quivers to make the CoHA product graded with respect to the cohomological degree is not present here. This is because this shift is given by the symmetrized Euler form of $Q$ which vanishes when $Q$ is the Jordan quiver.

There is a factorization coproduct on $\CA_{\BoA^2}, \CA_{\BoA^2}^{\nil}$ and $\CA_{\BoA^2}^{\SN}$ making them connected cocommutative bialgebras, see \cite[\S6.2]{davison2026hecke} for a general definition and also \cite[\S9]{jindal2024coha}, \cite[\S4.1]{hennecart2026degenerations} for the case of quivers. By the graded Milnor--Moore theorem, there are identifications of algebras
\[
 \CA_{\BoA^2}\cong \mathbf{U}(\hat{\Fg}_{\BoA^2}), \quad\CA_{\BoA^2}^{\nil}\cong \mathbf{U}(\hat{\Fg}^{\nil}_{\BoA^2}),\quad\CA_{\BoA^2}^{\SN}\cong \mathbf{U}(\hat{\Fg}^{\SN}_{\BoA^2})
\]
where $\hat{\Fg}_{\BoA^2}\subset\CA_{\BoA^2}$, $\hat{\Fg}^{\nil}_{\BoA^2}\subset\CA_{\BoA^2}^{\nil}$ and $\hat{\Fg}^{\SN}_{\BoA^2}\subset\CA_{\BoA^2}^{\SN}$ denote the respective Lie algebras of primitive elements.

\begin{theorem}
\label{theorem:commutativity-fully-nilpotent}
 The fully nilpotent CoHA of $\BoA^2$ is commutative. There are isomorphisms
 \[
  \hat{\Fg}^{\nil}_{\BoA^2}\cong s\BoQ[s,u], \quad \CA_{\BoA^2}^{\nil}\cong \mathbf{U}(s\BoQ[s,u])\cong \Sym(s\BoQ[s,u])
 \]
respectively of abelian Lie algebras and commutative associative algebras, where the cohomological degrees of generators are given by $\deg(s^k)=2$ (for any $k\geq 1$) and $\deg(u)=2$ and the dimension degree of $s^ku^l$ is $k$ for any $k,l$, so that the (dimension,cohomological)-degree of $s^ku^l$ is $(k,2+2l)$.
\end{theorem}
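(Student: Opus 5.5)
Two things need to be established: first, the identification of the underlying bigraded space of primitives $\hat{\Fg}^{\nil}_{\BoA^2}$ with $s\BoQ[s,u]$; second, the vanishing of the Lie bracket on it. Once both hold, the graded Milnor--Moore theorem identifies $\CA^{\nil}_{\BoA^2}$ with $\mathbf{U}$ of an abelian Lie algebra, which is $\Sym(s\BoQ[s,u])$. Since all generators sit in even cohomological degree, there are no sign issues, and the symmetric algebra is honestly commutative.

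\emph{Step 1: the primitive space.} I will use the general structure theory of nilpotent CoHAs of preprojective algebras via BPS sheaves. The primitive Lie algebra should be the affinized nilpotent BPS Lie algebra. In dimension $d$ this is $\HO^*(\BoA^2\text{-nilpotent locus})\otimes\BPS_d\otimes\HO^*(B\BoC^*)$. For the Jordan quiver $\BPS_d$ is one-dimensional for every $d\geq1$. Restricting to the nilpotent (point) locus, each dimension $d$ contributes a copy of $\HO^*(B\BoC^*)=\BoQ[u]$, generated in cohomological degree $2$ after the appropriate shift.

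I would confirm this via a Poincaré series check. The Borel--Moore homology of $\CC^{\nil}(\mathfrak{gl}_d)$ is pure, and its generating series, known from the counting of nilpotent commuting varieties (Feit--Fine type formula in its nilpotent variant), equals $\prod_{k\geq1,l\geq0}(1-t^k q^{2+2l})^{-1}$. This is exactly the series of $\Sym(s\BoQ[s,u])$. So the primitive part has the claimed bigraded dimensions, namely one class in each bidegree $(k,2+2l)$.

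\emph{Step 2: the bracket vanishes.} I will combine the two constraints from the abstract. The bracket is graded for cohomological degree, with no shift for the Jordan quiver, so
\[
[\,(k,2+2l),(k',2+2l')\,]\subset(k+k',4+2l+2l').
\]
The bracket is also filtered for the perverse filtration. On the affinized BPS piece of dimension $k$, the perverse degree of $s^ku^l$ is governed by the $u$-power $l$, since the BPS sheaf part is supported on the point and perversely pure. Filteredness then puts the bracket into perverse level at most $l+l'$, or at most a fixed shift of it. The target space in dimension $k+k'$ is one-dimensional in cohomological degree $4+2l+2l'$, namely $s^{k+k'}u^{l+l'+1}$, whose perverse degree is $l+l'+1$. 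This exceeds the allowed level, so the bracket must be zero.

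\emph{Main obstacle.} The hard part is pinning down the normalizations precisely. Specifically, I need that the perverse degree and the cohomological degree on $s^ku^l$ are related by an exact offset that produces this strict mismatch of one unit. That requires the nilpotent version of the perverse-filtration compatibility. The mismatch arises because in the nilpotent case the $\HO^*(\BoA^2)$ factor collapses to the cohomology of a point, and I would verify it by computing the perverse filtration on $\HO^*(\CC^{\nil})$ via the support map to $\Sym^d(\pt)$. For comparison, in the non-nilpotent case the extra cohomological degree can instead be absorbed by the $\HO^*(\BoA^2)$-classes.
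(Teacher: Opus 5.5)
Your overall strategy matches the paper's: gradedness plus perverse filtration forces the bracket to vanish, and Milnor--Moore finishes. Step~1 is fine; the point count is a legitimate alternative to computing $\imath_{\nil}^!\Delta_{d,*}\BoQ_{\BoA^2}[2]\cong\BoQ[-2]$ directly. There is, however, a genuine gap in Step~2, exactly where you write ``or at most a fixed shift of it''. The whole argument lives in that shift, and the way you propose to pin it down would fail.

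The perverse filtration attached to the support map $\CC^{\nil}(\mathfrak{gl}_d)\to\Sym^d(\pt)=\pt$ is just the filtration by cohomological degree. It gives $s^ku^l$ perverse degree $2+2l$, so filteredness only bounds the bracket by $4+2l+2l'$. That is exactly the degree of $s^{k+k'}u^{l+l'+1}$, so nothing is forced to vanish. Relatedly, $\imath_{\nil}^!\BPS$ is not perverse on the point (it is $\BoQ[-2]$). Your heuristic that the $\BoA^2$ factor ``collapses to the cohomology of a point'' also points the wrong way. Using stalks, $\BoQ_{\BoA^2}[2]\mapsto\BoQ[2]$ gives an offset of $-2$ per factor, which is the non-nilpotent bookkeeping where the bracket is genuinely nonzero.

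The missing idea is to take the perverse filtration on the \emph{non-nilpotent} sheafified CoHA $\SA_{\BoA^2}$ over $C(\mathfrak{gl}_d)\cong\Sym^d(\BoA^2)$, where $\BPS_d=\Delta_{d,*}\BoQ_{\BoA^2}[2]$ is perverse. The product is a morphism $\SA_{\BoA^2}\boxdot\SA_{\BoA^2}\to\SA_{\BoA^2}$ and $\boxdot$ is perverse t-exact ($\oplus$ is finite). Hence the bracket sends $\BPS[-2m]\boxdot\BPS[-2n]$ into $\bigoplus_{k\leq m+n}\BPS[-2k]$, after projecting parallel to a complement of $\BPS\otimes\HO^*_{\BoC^*}(\pt)$. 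This projection is harmless on global sections, since primitives are closed under the bracket.

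Only then apply the costalk $\imath^!_{\nil}$. Base change along the finite map $\oplus$, whose fibre over $0_{d+e}$ is $(0_d,0_e)$, together with K\"unneth gives $\imath_{\nil}^!(F\boxdot G)\cong\imath_{\nil}^!F\otimes\imath_{\nil}^!G$. Also $\imath^!_{\nil}\BPS_d[-2m]\cong\BoQ[-2-2m]$, because the origin has codimension $2$ in the diagonal $\BoA^2$. So the source picks up the $+2$ offset twice (degree $4+2m+2n$), while the target picks it up once (degrees $\leq 2+2(m+n)$).

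That is your one-unit mismatch. It must be derived by restricting the sheaf-level filtered statement on $\Sym^d(\BoA^2)$, not from any perverse filtration intrinsic to $\CC^{\nil}$.
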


As mentionned to us by Sasha Minets, the commutativity of the fully nilpotent CoHA of $\BoA^2$ may be deduced from \cite{mellit2023coherent}. Indeed, using the $\BoC^*$-equivariance of the sheafified CoHA $\SA_{\BoA^2}$, the fully nilpotent CoHA $\CA_{\BoA^2}^{\nil}$ coincides with the compactly supported CoHA $(C(\mathfrak{gl})\rightarrow \pt)_!\SA_{\BoA^2}$ considered in \cite{mellit2023coherent}. Then by \cite[Theorem~B]{mellit2023coherent}, the compactly supported CoHA is isomorphic to the subalgebra of the CoHA of $\mathbb{P}^2$ denoted $W_{\downarrow}^+(\BoA^2)$ in \cite{mellit2023coherent}. The latter is the subalgebra defined in \cite[Definition~5.1(a)]{mellit2023coherent}, involving only the cohomology class $[\pt]$ of the surface. Then, by \cite[Theorem~A]{mellit2023coherent}, we obtain the commutativity of $\CA_{\BoA^2}^{\nil}$ since if $\lambda=\mu=[\pt]$, then the cup-product of $\lambda$ and $\mu$ is $0$.

\begin{theorem}
\label{theorem:commutativity-semi-nilpotent}
The seminilpotent CoHA of $\BoA^2$ is commutative. There are isomorphisms
\[
 \hat{\Fg}^{\SN}_{\BoA^2}\cong s\BoQ[s,u], \quad \CA_{\BoA^2}^{\SN}\cong \mathbf{U}(s\BoQ[s,u])\cong\Sym(s\BoQ[s,u])
\]
respectively of abelian Lie algebras and commutative associative algebras, where the cohomological degrees of generators are given by $\deg(s^k)=0$ (for any $k\geq 1$) and $\deg(u)=2$ and the dimension degree of $s^ku^l$ is $k$ for any $k,l$, so that the (dimension,cohomological)-degree of $s^ku^l$ is $(k,2l)$.
\end{theorem}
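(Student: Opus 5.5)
Following the abstract, I would use the BPS Lie algebra description of the seminilpotent CoHA together with two constraints on the bracket: compatibility with the perverse filtration and compatibility with cohomological degree.

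\textbf{Step 1: the primitive Lie algebra as a vector space.} By the general structure theory for CoHAs of preprojective algebras and their nilpotent variants (as used for the fully nilpotent case, Theorem~\ref{theorem:commutativity-fully-nilpotent}), I expect
\[
\hat{\Fg}^{\SN}_{\BoA^2}\cong \BPS^{\SN}_{\BoA^2}\otimes\HO^*(B\BoC^*),
\]
where the seminilpotent BPS Lie algebra is one-dimensional in each dimension $d\geq 1$. For the Jordan quiver, the relevant BPS sheaf on the coarse space is supported on the small diagonal, so the restriction to seminilpotent loci gives the Borel--Moore homology of $\BoA^1\times\{0\}\cong\BoA^1$. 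That contribution sits in degree $0$ (the fundamental class, $\HO^{\rmBM}_{2}$ shifted by $\dim$), up to the normalisation of gradings. The affinization by $\HO^*(B\BoC^*)=\BoQ[u]$ then produces exactly $s\BoQ[s,u]$, with $s^k$ in bidegree $(k,0)$ and $u$ of degree $2$. As a consistency check, in the fully nilpotent case the support is a point, which shifts the degree of $s^k$ to $2$, matching Theorem~\ref{theorem:commutativity-fully-nilpotent}.

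\textbf{Step 2: grading constraint.} The bracket preserves the (dimension, cohomological) bidegree because the Euler form shift vanishes for the Jordan quiver. So the bracket sends bidegrees $(k,2a)\otimes(l,2b)$ to $(k+l,2a+2b)$. The target space in bidegree $(k+l,2(a+b))$ is the line spanned by $s^{k+l}u^{a+b}$.

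\textbf{Step 3: perverse filtration constraint.} The bracket on the affinized BPS Lie algebra is filtered for the perverse filtration, whose associated graded is abelian (it is the BPS Lie algebra tensored with $\BoQ[u]$, and the BPS Lie algebra of the Jordan quiver is abelian since the $\BPS$ sheaves in different dimensions have no bracket beyond $\HO^*(B\BoC^*)$-linear extension of the trivial one). Concretely, the perverse degree of $s^ku^a$ is governed by $a$ (the $u$-power). Filtration compatibility then forces
\[
[s^ku^a,s^lu^b]\in P_{\leq a+b-1}
\]
in perverse degree, strictly dropping because the associated graded bracket vanishes. The only vector in the right bidegree, however, is $s^{k+l}u^{a+b}$, which has perverse degree exactly $a+b$. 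The cohomological degree pins down the $u$-exponent, since $\deg s=0$. Hence the bracket is zero.

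\textbf{Step 4: conclusion.} Since $\hat{\Fg}^{\SN}_{\BoA^2}$ is abelian, the Milnor--Moore identification gives $\CA^{\SN}_{\BoA^2}\cong\mathbf U(s\BoQ[s,u])=\Sym(s\BoQ[s,u])$, which is commutative.

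\textbf{Main obstacle.} I expect the hardest part to be Step 1 together with identifying the perverse degree precisely. One must check that the cohomological normalisation puts $\BPS^{\SN}_d$ in degree $0$, and that perverse degree and cohomological degree are correlated as claimed on each bigraded piece, so that "filtered strictly lower" genuinely yields zero. If the normalisation of the perverse filtration is off, I would instead compare with the non-nilpotent algebra $\mathbf U(\Gr^F W^+_{1+\infty})$ via the restriction/pushforward map and use degree counting there.
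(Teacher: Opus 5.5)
Your proposal is correct and follows the paper's proof: the seminilpotent BPS Lie algebra $\bigoplus_{d\geq 1}\HO^*(\BoA^1)$ sits in cohomological degree $0$, the bracket strictly lowers the $u$-exponent because its associated graded for the perverse filtration vanishes, and cohomological degree then forces the bracket to be zero. The one input you restate rather than justify is that strict drop; the paper obtains it on $C(\mathfrak{gl})$ from \cite[Theorem~1.1]{hennecart2026degenerations} (the associated graded bracket is the $\HO^*_{\BoC^*}(\pt)$-linear extension of the bracket on $\BPS_{\BoA^2}$, which is trivial by \cite[Proposition~7.3]{davison2020bps}), and then applies $\imath_{\SN}^!$ summand by summand.
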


This is in strong contrast with the non-nilpotent cohomological Hall algebra of $\BoA^2$, which is studied in \cite{davison2022affine}. In this case, the affinized BPS Lie algebra is related to a degeneration of the Lie algebra $W_{1+\infty}$, and it is non-commutative. Taking the nilpotent restrictions changes the cohomological degrees of the generators, and together with the perverse degree constraints on the Lie bracket, this forces the Lie brackets to vanish.

There are other variants of nilpotent cohomological Hall algebras associated to surfaces. Namely, in \cite{diaconescu2026cohomological}, the authors study in particular the CoHA of sheaves on resolutions of Kleinian singularities supported on the exceptional divisor, which is typically a singular (nodal) projective curve. These are expected to be non-commutative in general.

\subsubsection{The equivariant deformation}
\label{subsubsection:the_equivariant_deformation}
With some extra work, one can describe the nilpotent and seminilpotent affinized BPS Lie algebra and CoHA in the presence of the action of $\BoC^*$ rescaling the first coordinate of $\BoA^2$ with weight $1$ and the second coordinate with weight $-1$ (\Cref{subsection:equivariant_versions}). Both descriptions are in terms of the Rees Lie algebra of the Lie algebra $W_{1+\infty}^+$ described in \cite{davison2022affine}.

We let $W_{1+\infty}^+$ be the Lie algebra over $\BoQ$ having the basis $\{z^mD^n\colon m\in\BoZ_{\geq 1}, n\in \BoZ_{\geq 0}\}$ with Lie bracket given by
\[
 [z^mD^a,z^nD^b]=z^{m+n}((D+n)^aD^b-D^a(D+m)^b).
\]
It is $\BoZ_{\geq 1}$-graded, with the $n$th graded piece given by the subspace generated by $z^nD^a$ for $a\geq 0$. We define the \emph{nilpotent cohomological filtration} by
\[
 F_i^{\nil}\coloneqq \Span_{\BoQ}\left(z^mD^a\colon 0\leq a\leq \frac{i-2}{2}, m\geq 1\right)
\]
and the \emph{seminilpotent cohomological filtration} by
\[
  F_i^{\SN}\coloneqq \Span_{\BoQ}\left(z^mD^a\colon 0\leq a\leq \frac{i}{2}, m\geq 1\right).
\]
Then, the Lie bracket on $W_{1+\infty}^+$ respects both filtrations.

We denote by
\[
 \mathbf{R}_{F^{\nil}}[W_{1+\infty}^+]\coloneqq \bigoplus_{i\geq 1}F_{2i}^{\nil}t^i\subset W_{1+\infty}^+\otimes_{\BoQ}t\BoQ[t]
\]
and
\[
 \mathbf{R}_{F^{\SN}}[W_{1+\infty}^+]\coloneqq \bigoplus_{i\geq 0}F_{2i}^{\SN}t^i\subset W_{1+\infty}^+\otimes_{\BoQ}\BoQ[t]
\]
the corresponding Rees Lie algebras.

We also recall the non-nilpotent filtration defined by Davison in \cite{davison2022affine}:
\[
 F_i\coloneqq\Span_{\BoQ}\left(z^mD^a\colon 0\leq a\leq \frac{i+2}{2}, m\geq 1\right)\,.
\]
Then, it is clear that $F_i^{\nil}=F_{i-4}$ and $F_i^{\SN}=F_{i-2}$.

The following theorem holds.

\begin{theorem}
\label{theorem:equivariant_cohas}
Let $\BoC^*$ be the one-dimensional torus rescaling the first coordinate axis of $\BoA^2$ with weight $1$ and the second coordinate axis with weight $-1$. Then, both the nilpotent and seminilpotent $\BoC^*$-equivariant CoHAs of $\BoA^2$ admit cocommutative coproducts. The Lie algebras of primitive elements $\hat{\Fg}_{\BoA^2}^{\BoC^*,\nil}$ and $\hat{\Fg}_{\BoA^2}^{\BoC^*,\SN}$ (the \emph{affinized nilpotent and seminilpotent $\BoC^*$-equivariant BPS Lie algebras}) are respectively isomorphic to $\mathbf{R}_{F^{\nil}}[W_{1+\infty}^+]$ and $\mathbf{R}_{F^{\SN}}[W_{1+\infty}^+]$. Furthermore, there are identifications of the $\BoC^*$-equivariant CoHAs
\[
 \CA_{\BoA^2}^{\BoC^*,\nil}\cong\mathbf{U}_{\BoQ[t]}(\mathbf{R}_{F^{\nil}}[W_{1+\infty}^+]),\quad\CA_{\BoA^2}^{\BoC^*,\SN}\cong\mathbf{U}_{\BoQ[t]}(\mathbf{R}_{F^{\SN}}[W_{1+\infty}^+])
\]
as $\BoQ[t]$-algebras, where $\HO^*_{\BoC^*}(\pt)\cong \BoQ[t]$.
\end{theorem}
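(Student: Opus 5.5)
We plan to deduce \Cref{theorem:equivariant_cohas} from Davison's description of the $\BoC^*$-equivariant non-nilpotent CoHA of $\BoA^2$ in \cite{davison2022affine}. That description identifies $\CA_{\BoA^2}^{\BoC^*}$ with $\mathbf{U}_{\BoQ[t]}(\mathbf{R}_F[W_{1+\infty}^+])$, where $F$ is the cohomological filtration recalled above. The argument relates the nilpotent and seminilpotent variants to this one, using the sheafified CoHA $\SA_{\BoA^2}$ on the coarse space $\Sym(\BoA^2)$ together with its perverse filtration. We proceed in four steps.

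\textbf{Step 1: base change and the coproduct.} We realize $\CA^{\BoC^*,\nil}_{\BoA^2}$ and $\CA^{\BoC^*,\SN}_{\BoA^2}$ as $\BoC^*$-equivariant hypercohomologies of the $!$-restrictions of $\SA_{\BoA^2}$ to the closed loci $\Sym(\{0\})\subset\Sym(\BoA^1\times\{0\})\subset\Sym(\BoA^2)$. By base change, the multiplication and the factorization coproduct restrict to these loci, giving $\BoQ[t]$-bialgebras. Cocommutativity is inherited from the cocommutativity of the sheafified coproduct, which is a local statement on $\Sym(\BoA^2)$.

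\textbf{Step 2: the BPS sheaf and PBW.} We use the sheafified PBW decomposition $\SA_{\BoA^2}\cong\Sym(\BPS\otimes\HO^*(B\BoC^*))$. The BPS sheaf is $\Delta_*\BoQ_{\BoA^2}[2]$, supported on the small diagonal in each dimension $d$. Restricting to the nilpotent locus gives the stalk at $0$, contributing $\HO^*_{\BoC^*}$ with a degree shift of $4$ relative to the global sections; the global sections $\HO^*(\BoA^2)$ and the $!$-stalk differ by $[-4]$. Restricting to the horizontal axis gives a shift of $2$. This produces $\BoQ[t]$-modules $\hat{\Fg}^{\BoC^*,\nil}$ and $\hat{\Fg}^{\BoC^*,\SN}$, which are free over $\BoQ[t]$ and whose specializations at $t=0$ have the graded dimensions of \Cref{theorem:commutativity-fully-nilpotent,theorem:commutativity-semi-nilpotent}. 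The Lie algebras of primitives are then the images of the equivariant affinized BPS Lie algebras under the natural maps.

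\textbf{Step 3: identifying the Lie algebras.} The inclusions of loci induce pushforward algebra morphisms $\CA^{\BoC^*,\nil}\to\CA^{\BoC^*,\SN}\to\CA^{\BoC^*}$. We will show these maps are injective after inverting $t$, by equivariant localization: the $\BoC^*$-fixed loci of the three stacks agree up to a locus whose complement has no fixed points, since fixed points of the weight $(1,-1)$ action on $\Sym(\BoA^2)$ lie over $0$. Since the source is $t$-torsion free, the maps are injective. The image of $\hat{\Fg}^{\BoC^*,\nil}$ in $\mathbf{R}_F[W_{1+\infty}^+]\otimes\BoQ[t^{\pm1}]$ is then computed on generators. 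The pushforward of the class $z^mD^a$ from the point is multiplication by the equivariant Euler class of the normal directions, a power of $t$ (up to sign, $t^2$ for the point and $t$ for the axis). We then check that the resulting span equals $\bigoplus_i F^{\nil}_{2i}t^i=\bigoplus_i F_{2i-4}t^i$, and similarly for $\SN$. This works because multiplying the Rees algebra by $t^2$ (respectively $t$) shifts the filtration index by $4$ (respectively $2$). The bracket is then the restriction of the Rees bracket, which lands in the right place because $F$ is a Lie filtration and $[F^{\nil}_{i},F^{\nil}_j]\subset F^{\nil}_{i+j-4}$ only requires $F_{i-4+j-4}\subset F_{i+j-8}$. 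Finally, Milnor--Moore over $\BoQ[t]$, applied to these free modules, gives the enveloping algebra statements.

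\textbf{Main obstacle.} The main difficulty is Step 3. One must verify that the image of the nilpotent primitive part is exactly the Rees piece, not merely contained in it. This needs the precise identification of Davison's generators with pushforwards from the nilpotent loci, i.e.\ compatibility of the perverse/cohomological filtration with restriction to the support. We would first try to compare graded dimensions using Step 2, so that containment suffices.
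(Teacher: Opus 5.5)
Your proposal is correct, but it identifies the Lie algebras by a different mechanism than the paper.

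\textbf{The paper's route.} The paper works at the sheaf level. It shows that each component $\Delta_{d,*}\BoQ_{\BoA^2}[2-2m]\boxdot\Delta_{e,*}\BoQ_{\BoA^2}[2-2n]\to\Delta_{d+e,*}\BoQ_{\BoA^2}[2-2k]$ of the equivariant sheafified bracket lies in a Hom space spanned by a single power of $\hbar$. From this it deduces that the sheafified bracket is determined by Davison's absolute bracket. It then proves that $\imath_{\nil}^!$ and $\imath_{\SN}^!$ act on these Hom spaces by the Euler classes $-\hbar^2$ and $-\hbar$. This yields explicit bracket formulas for the generators $p^{\nil}_{d,a}$, $p^{\SN}_{d,a}$ and the explicit maps $\Phi_{\nil}$, $\Phi_{\SN}$.

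\textbf{Your route.} You instead push forward along the closed embeddings, $\iota\colon\CA^{\BoC^*,\nil}_{\BoA^2}\to\CA^{\BoC^*}_{\BoA^2}$, which is an algebra morphism. On each BPS summand this map is multiplication by the same Euler classes. This buys you two things.
\begin{enumerate}
\item You never need a sheaf-level bracket on $\BPS_{\BoA^2}\otimes\HO^*_{\BoC^*}(\pt)$, which the paper can only define after projecting along a non-canonical complement $E$.
\item You never need to match generators with the elements $z^dD^a$. Since $\hat\Fg^{\BoC^*}_{\BoA^2}$ is the direct sum of the equivariant cohomologies of the summands $\Delta_{d,*}\BoQ_{\BoA^2}[2-2a]$, each free of rank one over $\BoQ[t]$, the image of $\hat\Fg^{\BoC^*,\nil}_{\BoA^2}$ is exactly the ideal $t^2\hat\Fg^{\BoC^*}_{\BoA^2}$ (resp.\ $t\hat\Fg^{\BoC^*}_{\BoA^2}$ in the seminilpotent case). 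Davison's $\BoQ[t]$-linear isomorphism sends it to $t^2\mathbf{R}_F=\mathbf{R}_{F^{\nil}}$ (resp.\ $t\mathbf{R}_F=\mathbf{R}_{F^{\SN}}$), and both brackets are restrictions of the bracket on $W^+_{1+\infty}\otimes\BoQ[t^{\pm1}]$.
\end{enumerate}
So what you call the main obstacle is not one. Equality rather than containment is immediate, and no dimension count is needed. Injectivity on primitives follows from the same summand-wise computation. Equivariant localization, which is correct as you state it, is only needed for injectivity on the whole CoHA, and the Lie algebra statement does not use that. In exchange, the paper's route gives sheaf-level information and explicit structure constants.

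\textbf{Two points to tighten.}

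First, justify the inclusion $\iota(\hat\Fg^{\BoC^*,\nil}_{\BoA^2})\subset\hat\Fg^{\BoC^*}_{\BoA^2}$ by naturality of the counit $\imath_*\imath^!\to\id$ along $\BPS_{\BoA^2}\otimes\HO^*_{\BoC^*}(\pt)\to\SA_{\BoA^2}$. Your sentence describing the primitives as images of the non-nilpotent BPS Lie algebras under natural maps points the wrong way. There is no natural map from the global sections of a complex to those of its $!$-restriction.

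Second, in Step 1 the existence of an equivariant coproduct is exactly where the choice of torus matters. The weight $(1,-1)$ action extends to the triple Jordan quiver, acting trivially on the extra loop and preserving the cubic potential. Hence the construction that separates supports along that loop is equivariant. It is not a locality statement on $\Sym(\BoA^2)$: the nilpotent locus lies over a single point of $\Sym^d(\BoA^2)$, where no supports can be separated.
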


However, when we let the bigger torus $T'\coloneqq (\BoC^*)^2$ act on $\BoA^2$ by $(t_1,t_2)\cdot (x,y)=(t_1x,t_2y)$, the nilpotent $T'$-equivariant CoHAs $\CA_{\BoA^2}^{T',\nil}$ and $\CA_{\BoA^2}^{T',\SN}$ are noncommutative and are harder to describe than the non-nilpotent CoHA $\CA_{\BoA^2}^{T'}$, which is isomorphic to the positive Yangian $\mathcal{Y}_{t_1,t_2}^+(\widehat{\mathfrak{gl}(1)})$ \cite[Theorem~5.11]{davison2022affine}. The latter identification uses the spherical generation of $\CA_{\BoA^2}^{T'}$ \cite[Theorem~B]{davison2022affine}, which fails for both $\CA_{\BoA^2}^{T',\nil}$ and $\CA_{\BoA^2}^{T',\SN}$, \cite[Proposition~5.8]{davison2022affine}. We therefore leave this identification to further investigations. Moreover, in that situation, the cohomological Hall algebra does not admit a factorization coproduct, as the procedure to construct the coproduct cannot be performed $T'$-equivariantly (the $T'$-action extends to the triple quiver and leaves the canonical cubic potential invariant, but does not leave any of the loops invariant and so, the factorization coproduct cannot be defined). Therefore, the $T'$-equivariant CoHAs are not enveloping algebras in a natural way and it is necessary to study them directly.

The proofs of Theorems~\ref{theorem:commutativity-fully-nilpotent}, \ref{theorem:commutativity-semi-nilpotent} and \ref{theorem:equivariant_cohas} are given in \Cref{section:commutativity_proof}.

\subsection*{Conventions and Notations}
We work with constructible sheaves of $\BoQ$-vector spaces and with cohomology with rational coefficients.

\subsection*{Acknowledgements} I would like to thank Sasha Minets for useful comments on a former version of this paper.

\section{The CoHA of $\BoA^2$, the BPS Lie algebra and its affinization}
\label{section:CoHA-A2}
The CoHA of $\BoA^2$ is an associative algebra structure on the Borel--Moore homology of the commuting stack $\CC(\mathfrak{gl})\coloneqq \bigsqcup_{d\geq 0}\CC(\mathfrak{gl}_d)$, where
\[
 \CC(\mathfrak{gl}_d)\coloneqq \{(x,y)\in\mathfrak{gl}_d\mid [x,y]=0\}/\GL_d
\]
denotes the stack of pairs of commuting $d\times d$-matrices. We let $\JH_d\colon \CC(\mathfrak{gl}_d)\rightarrow C(\mathfrak{gl}_d)$ be the good moduli space (or affinization) morphism where $C(\mathfrak{gl}_d)\coloneqq \{(x,y)\in\mathfrak{gl}_d\mid [x,y]=0\}\cms\GL_d\cong (\BoA^2)^d\cms\mathfrak{S}_d$ is the affine GIT quotient and the isomorphism is the Chevalley restriction isomorphism for commuting pairs (the identification of the reduced subschemes is a combination of \cite[Theorem~2.9]{joseph1997harish} and \cite[Theorem~A]{richardson1979commuting} and the reducedness of the GIT quotient is proven in \cite{gan2006almost}). We define $C(\mathfrak{gl})\coloneqq\bigsqcup_{d\in\BoN}C(\mathfrak{gl}_d)$ and $\JH\colon \CC(\mathfrak{gl})\rightarrow C(\mathfrak{gl})$.

We consider the direct sum morphism $\oplus\colon C(\mathfrak{gl})\times C(\mathfrak{gl})\rightarrow C(\mathfrak{gl})$ obtained by combining the morphisms $C(\mathfrak{gl}_d)\times C(\mathfrak{gl}_{d'})\rightarrow C(\mathfrak{gl}_{d+d'})$. This is a finite morphism (and therefore, it is perverse t-exact), and we obtain symmetric monoidal structures on $\Perv(C(\mathfrak{gl}))$ and $\CD^+_{\rmc}(C(\mathfrak{gl}),\BoQ)$ by the formula $\CF\boxdot\CG\coloneqq \oplus_*(\CF\boxtimes\CG)$ for $\CF,\CG\in\Perv(C(\mathfrak{gl}))$ (resp. $\CF,\CG\in\CD^+_{\rmc}(C(\mathfrak{gl}),\BoQ))$.

The sheafified CoHA is an algebra structure on the complex of constructible sheaves $\SA_{\BoA^2}\coloneqq\bigoplus_{d\in\BoN}\JH_{d,*}\BD\BoQ_{\CC(\mathfrak{gl}_d)}\in\CD^+_{\rmc}(C(\mathfrak{gl}),\BoQ)$ with respect to the monoidal structure $\boxdot$. The absolute CoHA $\CA_{\BoA^2}\coloneqq \HO^*(\SA_{\BoA^2})=\bigoplus_{d\geq 0}\HO^{\rmBM}_*(\CC(\mathfrak{gl}_d))$ is obtained by taking the derived global sections. The construction of the CoHA product at the sheaf level is explained in \cite[\S3.2.4]{davison2020bps} in terms of dimensional reduction from the corresponding Jacobi algebra of the triple quiver with the canonical cubic potential, and in \cite[Appendix A]{davison2022bps} directly at the level of preprojective algebras. At the level of derived sections, the construction of the product is explained in \cite{schiffmann2013cherednik} in the particular case of the Jordan quiver we are interested in, then extended to all preprojective algebras of quivers in \cite{yang2018cohomological}. The constructions of the products via dimensional reduction and at the preprojective algebra levels are compared in \cite[Appendix by Ben Davison]{ren2017cohomological} and \cite{yang2020two}: these two products coincide up to an explicit sign that is not directly relevant for our study.

We let $\Delta_d\colon\BoA^2\rightarrow C(\mathfrak{gl}_d)$, $(x,y)\mapsto ((x,y),\hdots,(x,y))$ be the small diagonal in $(\BoA^2)^d\cms \mathfrak{S}_d$.

The BPS sheaf of $\BoA^2$ (equivalently of the commuting stack) is the perverse sheaf
\[
 \BPS_{\BoA^2}=\bigoplus_{d\geq 1}\Delta_{d,*}\BoQ_{\BoA^2}[2]\in\Perv(C(\mathfrak{gl})).
\]
It is described in \cite[Theorem~5.1]{davison2016integrality}. It has the trivial Lie bracket, see \cite[Proposition~7.3]{davison2020bps}. The cohomological integrality isomorphism \cite[Theorem~D]{davison2016integrality} is an isomorphism of complexes of constructible sheaves
\[
 \JH_{*}\BD\BoQ_{\CC(\mathfrak{gl})}\cong\Sym(\BPS_{\BoA^2}\otimes\HO^*_{\BoC^*}(\pt))\in\CD^+_{\rmc}(C(\mathfrak{gl}),\BoQ).
\]
We refer to \cite{davison2020cohomological} for details regarding the symmetric power in the category of constructible complexes. Note that the virtual shift that usually appears on the constant sheaf is trivial since the virtual dimension of $\CC(\mathfrak{gl})$ is $0$. We let $\Fg_{\BoA^2}\coloneqq \HO^*(\BPS_{\BoA^2})$ be the \emph{BPS Lie algebra} of $\BoA^2$. It is commutative \cite[Proposition~7.3]{davison2020bps}. There is a cocommutative coproduct on $\CA_{\BoA^2}$ making it a bialgebra \cite[\S6.2]{davison2026hecke}, see also \cite{jindal2024coha,hennecart2026degenerations}. We denote by $\hat{\Fg}_{\BoA^2}\subset \CA_{\BoA^2}$ the subspace of primitive elements. It is a Lie algebra and by the graded Milnor--Moore theorem, there is an isomorphism of algebras $\CA_{\BoA^2}\cong\mathbf{U}(\hat{\Fg}_{\BoA^2})$. Moreover, by the support lemma for the BPS sheaf, there is an identification of vector spaces $\hat{\Fg}_{\BoA^2}\cong\HO^*(\BPS_{\BoA^2}\otimes\HO^*_{\BoC^*}(\pt))$ \cite[Proposition~3.8]{davison2022affine}. By semisimplicity of the underlying constructible complex of $\SA_{\BoA^2}$ \cite[Theorem~A]{davison2020bps}, we choose a direct sum complement $E$ of $\BPS_{\BoA^2}\otimes\HO^*_{\BoC^*}(\pt)$, so that as constructible complexes $\SA_{\BoA^2}\cong (\BPS_{\BoA^2}\otimes\HO^*_{\BoC^*}(\pt))\oplus E$. Even though we do not claim that $\BPS_{\BoA^2}\otimes\HO^*_{\BoC^*}(\pt)$ is stable under the sheafified Lie bracket of $\SA_{\BoA^2}$ (although we believe it is the case), we know by the existence of the coproduct on $\CA_{\BoA^2}$ that after taking derived global sections, it is stable under the Lie bracket. Therefore, we will abusively call Lie bracket on $\BPS_{\BoA^2}\otimes\HO^*_{\BoC^*}(\pt)$ the composition of the Lie bracket $(\BPS_{\BoA^2}\otimes\HO^*_{\BoC^*}(\pt))\boxdot(\BPS_{\BoA^2}\otimes\HO^*_{\BoC^*}(\pt))\rightarrow\SA_{\BoA^2}$ with the projection $\SA_{\BoA^2}\rightarrow\BPS_{\BoA^2}\otimes\HO^*_{\BoC^*}(\pt)$ parallel to $E$. Note that after taking derived global sections, there is no difference between the actual Lie bracket and the Lie bracket followed by projection since $\HO^*(\BPS_{\BoA^2})\otimes\HO^*_{\BoC^*}(\pt)$ is stable under the Lie bracket \cite{davison2022affine}. Therefore, although the composition of the Lie bracket and the projection may depend on the choice of $E$ at the sheaf level, it does not after taking derived global sections.

One can add a torus action to the picture. Namely, we let $T'$ be a torus acting on $\BoA^2$. Then, the sheafified CoHA $\SA_{\BoA^2}$ is $T'$-equivariant, and so defines an object $\SA_{\BoA^2}^{T'}$ in the $T'$-equivariant derived category $\CD^+_{\rmc,T'}(C(\mathfrak{gl}))$. We denote by $\CA_{\BoA^2}^{T'}$ the $\HO^*_{T'}(\pt)$-algebra obtained by taking derived global sections. The monoidal structure on $\CD^+_{\rmc,T'}(C(\mathfrak{gl}))$ is defined by the formula $\CF\boxdot \CG$ as before, noticing that if $\CF, \CG$ are $T'$-equivariant, so is $\CF\boxdot \CG$. Then, there is a $T'$-equivariant version of the BPS Lie algebra $\Fg_{\BoA^2}^{T'}$ \cite[Theorem~3.4]{davison2022affine} and it satisfies the cohomological integrality isomorphism
\[
\CA_{\BoA^2}^{T'}\cong\Sym_{\HO^*_{T'}(\pt)}(\Fg_{\BoA^2}^{T'}\otimes\HO^*_{\BoC^*}(\pt)).
\]

It is also natural to consider the action of $T'\coloneqq (\BoC^*)^2$ on $\BoA^2$ with weight $(1,0)$ on the horizontal axis and weight $(0,1)$ on the vertical coordinate axis. In this case, the CoHA $\CA_{\BoA^2}^{T'}$ does not admit a coproduct, as one cannot perform the construction of the factorization coproduct.

A standard action is also that of $\BoC^*$ on $\BoA^2$ with weight $1$ on the horizontal axis and $-1$ on the vertical axis. Then, the corresponding CoHA admits a coproduct, since the action extends to the triple quiver with canonical cubic potential with trivial action on the extra loop.

\section{Fully nilpotent and seminilpotent restrictions of the CoHA of $\BoA^2$}
\label{section:nilpotent-cohas}

In this paper, we are interested in the nilpotent variants of the cohomological Hall algebra of $\BoA^2$ recalled in \Cref{section:CoHA-A2}. They come in two different flavours: the \emph{fully nilpotent CoHA} (\Cref{subsection:fully_nilpotent_coha}) and the \emph{seminilpotent CoHA} (\Cref{subsection:seminilpotent-coha}). One sometimes considers the \emph{strictly seminilpotent CoHA} as in \cite{davison2022bps}, but for one-vertex quivers it coincides with the seminilpotent CoHA.

\subsection{The fully nilpotent CoHA}
\label{subsection:fully_nilpotent_coha}
The fully nilpotent CoHA is an algebra structure on the Borel--Moore homology of the nilpotent commuting stack $\CA_{\BoA^2}^{\nil}\coloneqq \bigoplus_{d\in\BoN}\HO^{\rmBM}_*(\CC^{\nil}(\mathfrak{gl}_d),\BoQ)$ where $\CC^{\nil}(\mathfrak{gl})$ is the commuting stack of pairs of nilpotent commuting $d\times d$ matrices, defined by the Cartesian square
\[\begin{tikzcd}
	{\CC^{\nil}(\mathfrak{gl}_d)} & {\CC(\mathfrak{gl}_d)} \\
	{\{0_d\}} & {C(\mathfrak{gl}_d)}
	\arrow[from=1-1, to=1-2]
	\arrow[from=1-1, to=2-1]
	\arrow["{\JH_d}", from=1-2, to=2-2]
	\arrow["{\imath_{\nil}}"', from=2-1, to=2-2]
    \arrow["\lrcorner"{anchor=center, pos=0.125}, draw=none, from=1-1, to=2-2]
\end{tikzcd}\]
where $0_d\in C(\mathfrak{gl}_d)$ denotes the pair of zero $d\times d$ matrices. We still denote by $\imath_{\nil}$ the morphism $\BoN\rightarrow C(\mathfrak{gl})$, $d\mapsto 0_{d}$. By base-change, there is an identification $\CA_{\BoA^2}^{\nil}\cong\imath_{\nil}^!\SA_{\BoA^2}$ and the product on $\imath^!_{\nil}\SA_{\BoA^2}$ is obtained by applying $\imath_{\nil}^!$ to the product on $\SA_{\BoA^2}$. Here, we identify the complex of constructible sheaves $\imath^!_{\nil}\SA_{\BoA^2}$ on $\BoN$ with a $\BoN$-graded vector space. One can define a factorization coproduct on $\CA_{\BoA^2}^{\nil}$ exactly as in \cite[\S6.2]{davison2026hecke}. These structures make $\CA_{\BoA^2}^{\nil}$ a connected cocommutative bialgebra. We define $\Fg_{\BoA^2}^{\nil}\coloneqq \imath_{\nil}^!\BPS_{\BoA^2}$ and let $\hat{\Fg}_{\BoA^2}^{\nil}\subset \CA_{\BoA^2}^{\nil}$ be the Lie algebra of primitive elements. Then, $\hat{\Fg}_{\BoA^2}^{\nil}$ coincides with the subspace $\imath^!_{\nil}(\BPS\otimes\HO^*_{\BoC^*}(\pt))\subset\CA_{\BoA^2}^{\nil}$ (by the support lemma for the BPS sheaf and the cohomological integrality isomorphism for $\CA_{\BoA^2}^{\nil}$, exactly as in \cite[Proposition~3.8]{davison2022affine}) and there is an isomorphism of algebras $\CA_{\BoA^2}^{\nil}\cong\mathbf{U}(\hat{\Fg}_{\BoA^2}^{\nil})$ by the graded Milnor--Moore theorem.

We have $\Fg_{\BoA^2}^{\nil}\cong \bigoplus_{d\geq 1}\BoQ_{\{0_d\}}[-2]$ since $(\{0\}\rightarrow\BoA^2)^!\BoQ_{\BoA^2}[2]\cong \BoQ_{\{0\}}[-2]$. It is entirely concentrated in cohomological degree $2$.

\subsection{The seminilpotent CoHA}
\label{subsection:seminilpotent-coha}
The seminilpotent CoHA is an algebra structure on the Borel--Moore homology of the seminilpotent commuting stack $\CA_{\BoA^2}^{\SN}\coloneqq \bigoplus_{d\in\BoN}\HO^{\rmBM}_*(\CC^{\SN}(\mathfrak{gl}_d),\BoQ)$ where $\CC^{\SN}(\mathfrak{gl}_d)$ is the stack of commuting pairs of $d\times d$ matrices with the second matrix nilpotent. For $d\geq 0$, we let $\CM^{\SN}_d\coloneqq\mathfrak{gl}_d\cms\GL_d\rightarrow C(\mathfrak{gl}_d), x\mapsto (x,0)$ be the closed embedding. Then, the seminilpotent stack is defined by the Cartesian diagram
\[\begin{tikzcd}
	{\CC^{\SN}(\mathfrak{gl}_d)} & {\CC(\mathfrak{gl}_d)} \\
	{\CM^{\SN}_d} & {C(\mathfrak{gl}_d)}
	\arrow[from=1-1, to=1-2]
	\arrow[from=1-1, to=2-1]
	\arrow["\lrcorner"{anchor=center, pos=0.125}, draw=none, from=1-1, to=2-2]
	\arrow["{\JH_d}", from=1-2, to=2-2]
	\arrow["{\imath_{\SN}}"', from=2-1, to=2-2]
\end{tikzcd}\]
We still denote by $\imath_{\SN}\colon\bigsqcup_{d\in\BoN}\CM_d^{\SN}\rightarrow C(\mathfrak{gl})$. The sheafified seminilpotent CoHA is $\imath_{\SN}^!\SA_{\BoA^2}$ with product obtained by application of $\imath_{\SN}^!$. By base-change, there is an identification $\CA_{\BoA^2}^{\SN}\cong \HO^*(\imath_{\SN}^!\SA_{\BoA^2})$. One can construct a factorization coproduct on $\CA_{\BoA^2}^{\SN}$ exactly as in \cite[\S6.2]{davison2026hecke}. The product and coproduct make $\CA_{\BoA^2}^{\SN}$ a connected cocommutative bialgebra. We define $\BPS_{\BoA^2}^{\SN}\coloneqq\imath_{\SN}^!\BPS_{\BoA^2}$, $\Fg^{\SN}_{\BoA^2}\coloneqq \HO^*(\BPS_{\BoA^2}^{\SN})$, and we let $\hat{\Fg}_{\BoA^2}^{\SN}\subset\CA_{\BoA^2}^{\SN}$ be the Lie algebra of primitive elements. Then, $\hat{\Fg}_{\BoA^2}^{\SN}$ coincides with the subspace $\HO^*(\BPS_{\BoA^2}^{\SN}\otimes\HO^*_{\BoC^*}(\pt))\subset\CA_{\BoA^2}^{\SN}$ (by the support theorem for the BPS sheaf and the cohomological integrality isomorphism as in \cite[Proposition~3.8]{davison2022affine}). By the graded Milnor--Moore theorem, there is an isomorphism of algebras $\CA_{\BoA^2}^{\SN}\cong\mathbf{U}(\hat{\Fg}_{\BoA^2}^{\SN})$.

We let $\delta_d\colon \BoA^1\rightarrow C(\mathfrak{gl}_d)$, $x\mapsto (xI_d,0)$. We have $\BPS_{\BoA^2}^{\SN}\cong\bigoplus_{d\geq 1}\delta_{d,*}\BoQ_{\BoA^1}$ since $(\BoA^1\rightarrow\BoA^2)^!\BoQ_{\BoA^2}[2]\cong\BoQ_{\BoA^1}$ and $\Fg_{\BoA^2}^{\SN}\cong \bigoplus_{d\geq 1}\BoQ$ by taking derived global sections, since $\HO^*(\BoA^1)=\BoQ$. It is entirely concentrated in cohomological degree $0$.

\subsection{Equivariant versions}
\label{subsection:equivariant_versions}
Since the nilpotent and seminilpotent loci are invariant under the actions of $T'=(\BoC^*)^2$ and of $\BoC^*$ defined in the last two paragraphs of \Cref{section:CoHA-A2} (respectively with weights $(1,0),(0,1)$ and $1,-1$ on the coordinate axes of $\BoA^2$), one can define the $T'$-equivariant and $\BoC^*$-equivariant nilpotent CoHAs $\CA_{\BoA^2}^{T',\nil}$, $\CA_{\BoA^2}^{T',\SN}$, $\CA_{\BoA^2}^{\BoC^*,\nil}$ and $\CA_{\BoA^2}^{\BoC^*,\SN}$ as in \Cref{subsection:fully_nilpotent_coha} and \Cref{subsection:seminilpotent-coha}.

\section{Commutativity of the affinized nilpotent BPS Lie algebras and equivariant versions}

\label{section:commutativity_proof}

\begin{proof}[Proof of \Cref{theorem:commutativity-fully-nilpotent}]
We show that the Lie bracket on the affinized fully nilpotent BPS Lie algebra $\hat{\Fg}^{\nil}_{\BoA^2}$ vanishes.

The Lie bracket on $\BPS_{\BoA^2}\otimes\HO^*_{\BoC^*}(\pt)$ respects the perverse filtration. This comes from the fact that the CoHA product on $\SA_{\BoA^2}$ is given by a morphism of constructible complexes $\SA_{\BoA^2}\boxdot\SA_{\BoA^2}\rightarrow\SA_{\BoA^2}$ and so, using standard properties of $t$-structures, it respects the perverse filtration, and the same becomes immediately true for the Lie bracket. This means that for any $m,n\in\BoN$, identifying $\BPS_{\BoA^2}\otimes\HO^*_{\BoC^*}(\pt)\cong\bigoplus_{m\geq 0}\BPS_{\BoA^2}[-2m]$ as constructible complexes, the Lie bracket sends $\BPS_{\BoA^2}[-2m]\boxdot\BPS_{\BoA^2}[-2n]$ to $\bigoplus_{k\leq m+n}\BPS_{\BoA^2}[-2k]$. Applying $\imath^!_{\nil}$, we obtain that the Lie bracket on $\hat{\Fg}^{\nil}_{\BoA^2}$ sends $\Fg^{\nil}_{\BoA^2}u^m\otimes\Fg^{\nil}_{\BoA^2}u^n$ to $\Fg^{\nil}_{\BoA^2,\leq m+n}\coloneqq\bigoplus_{k\leq m+n}\Fg^{\nil}_{\BoA^2}u^k$ where $u\in\HO^*_{\BoC^*}(\pt)$ is a degree $2$ generator. The Lie bracket is also graded for the cohomological degree. Since $\Fg^{\nil}_{\BoA^2}u^m\otimes\Fg^{\nil}_{\BoA^2}u^n$ sits in cohomological degree $2+2m+2+2n=4+2(m+n)$ and $\Fg^{\nil}_{\BoA^2,\leq m+n}$ sits in cohomological degrees $\leq 2+2(m+n)$, the Lie bracket on $\Fg^{\nil}_{\BoA^2}u^m\otimes\Fg^{\nil}_{\BoA^2}u^n$ vanishes. This proves that $\hat{\Fg}^{\nil}_{\BoA^2}$ is an abelian Lie algebra, and therefore that $\CA_{\BoA^2}^{\nil}\cong\mathbf{U}(\hat{\Fg}_{\BoA^2}^{\nil})$ is a commutative algebra.
\end{proof}

\begin{proof}[Proof of \Cref{theorem:commutativity-semi-nilpotent}]
As explained in \Cref{subsection:seminilpotent-coha}, we have
\[
 \imath_{\SN}^!\BPS_{\BoA^2}\cong \bigoplus_{d\geq 1}\delta_{d,*}\BoQ_{\BoA^1}.
\]
Moreover, since the associated graded with respect to the less perverse filtration of the Lie bracket on $\BPS_{\BoA^2}\otimes\HO^*_{\BoC^*}(\pt)$ vanishes (see \cite[Theorem~1.1]{hennecart2026degenerations}) by the fact that the Lie bracket on $\BPS_{\BoA^2}$ vanishes \cite[Proposition~7.3]{davison2020bps}, the Lie bracket on $\BPS_{\BoA^2}\otimes\HO^*_{\BoC^*}(\pt)$ sends $\BPS_{\BoA^2}[-2m]\boxdot\BPS_{\BoA^2}[-2n]$ to $\BPS_{\BoA^2}[-2k]$ for $k<m+n$. By applying $\imath_{\SN}^!$ and taking derived global sections, the Lie bracket on $\hat{\Fg}^{\SN}_{\BoA^2}$ sends $\Fg^{\SN}_{\BoA^2}u^m\otimes\Fg^{\SN}_{\BoA^2}u^n$ to $\Fg^{\SN}_{\BoA^2,<m+n}\coloneqq\bigoplus_{k<m+n}\Fg^{\SN}_{\BoA^2}u^k$. Since $\Fg^{\SN}_{\BoA^2}$ lies entirely in cohomological degree $0$, then $\Fg^{\SN}_{\BoA^2}u^m\otimes\Fg^{\SN}_{\BoA^2}u^n$ lies entirely in cohomological degree $2(m+n)$ and $\Fg^{\SN}_{\BoA^2,<m+n}$ lies in cohomological degrees $<2(m+n)$. Therefore, since the Lie bracket is graded with respect to the cohomological degree, it must vanish. This proves the commutativity of $\hat{\Fg}^{\SN}_{\BoA^2}$ and therefore of $\CA_{\BoA^2}^{\SN}\cong\mathbf{U}(\hat{\Fg}^{\SN}_{\BoA^2})$.
\end{proof}

Before turning to the proof of \Cref{theorem:equivariant_cohas} regarding the equivariant versions of nilpotent CoHAs of $\BoA^2$, we give a couple of lemmas that will be instrumental.

\begin{lemma}
\label{lemma:identifications}
Let $\BoC^*$ act on the commuting stack $\CC(\mathfrak{gl})$ with weight $1$ on the first coordinate and weight $-1$ on the second coordinate. This action descends to the GIT quotient $C(\mathfrak{gl})$. Then, for any $d,e\in\BoN_{\geq 1}$, $m,n\geq 0$, we have
\begin{multline}
 \Hom_{\CD_{\rmc,\BoC^*}^{\rmb}(C(\mathfrak{gl}_{d+e}))}(\Delta_{d,*}\BoQ_{\BoA^2}[2-2m]\boxdot\Delta_{e,*}\BoQ_{\BoA^2}[2-2n],\Delta_{d+e,*}\BoQ_{\BoA^2}[2-2k])\cong\\ \HO^{2(m+n-k-1)}_{\BoC^*}(\pt)\cong
 \left\{\begin{aligned}
 &\BoQ\cdot\hbar^{m+n-k-1} \quad&\text{if $k\leq m+n-1$}\\
 &0\quad&\text{otherwise}
 \end{aligned}\right.
\end{multline}
\begin{multline}
 \Hom_{\CD_{\rmc,\BoC^*}^{\rmb}(\CM_{d+e}^{\SN})}(\delta_{d,*}\BoQ_{\BoA^1}[-2m]\boxdot\delta_{e,*}\BoQ_{\BoA^1}[-2n],\delta_{d+e,*}\BoQ_{\BoA^1}[-2k])\cong\\ \HO^{2(m+n-k)}_{\BoC^*}(\pt)\cong
 \left\{\begin{aligned}
 &\BoQ\cdot\hbar^{m+n-k} \quad&\text{if $k\leq m+n$}\\
 &0\quad&\text{otherwise}
 \end{aligned}\right.
\end{multline}
and
\begin{multline}
\Hom_{\CD^{\rmb}_{\rmc,\BoC^*}(\pt)}
\left(
\BoQ_{\{0_d\}}[-2-2m]\otimes
\BoQ_{\{0_e\}}[-2-2n],
\BoQ_{\{0_{d+e}\}}[-2-2k]
\right)
\\
\cong
\HO^{2(m+n-k+1)}_{\BoC^*}(\pt)
\cong
\begin{cases}
\BoQ\cdot \hbar^{m+n-k+1}, & \text{if } k\leq m+n+1,\\
0, & \text{otherwise.}
\end{cases}
\end{multline}
identifying $\HO^*_{\BoC^*}(\pt)\cong\BoQ[\hbar]$.
\end{lemma}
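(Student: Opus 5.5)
The plan is to reduce each of the three Hom computations to equivariant cohomology of a point (or of an affine space) via adjunctions, using that all sheaves involved are constant sheaves on smooth loci pushed forward by closed embeddings.

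\emph{First isomorphism.} Compute $\Delta_{d,*}\BoQ_{\BoA^2}\boxdot\Delta_{e,*}\BoQ_{\BoA^2}=\oplus_*(\Delta_d\times\Delta_e)_*\BoQ_{\BoA^2\times\BoA^2}$. The composite $\BoA^2\times\BoA^2\to C(\mathfrak{gl}_{d+e})$ is finite, $\BoC^*$-equivariant, and sends $(p,q)$ to the class of $(p^{d},q^{e})$. Since the target is pushed forward from the small diagonal $\Delta_{d+e}$, adjunction gives
\[
\Hom(\oplus_*(\Delta_d\times\Delta_e)_*\BoQ,\Delta_{d+e,*}\BoQ[s])\cong\Hom(\BoQ_{\BoA^2\times\BoA^2},(\oplus\circ(\Delta_d\times\Delta_e))^!\Delta_{d+e,*}\BoQ[s]).
\]
The fibre product of $\BoA^2\times\BoA^2$ with $\Delta_{d+e}(\BoA^2)$ over $C(\mathfrak{gl}_{d+e})$ is the diagonal $\BoA^2\hookrightarrow\BoA^2\times\BoA^2$, at least set-theoretically, and that suffices for constructible sheaves. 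By base change, the $!$-pullback is therefore $\jmath_*\jmath^!\BoQ_{\BoA^2\times\BoA^2}[s]$, where $\jmath$ is the diagonal embedding, and $\jmath^!\BoQ=\BoQ_{\BoA^2}[-4]$ by purity for a smooth codimension-$2$ embedding. The Hom thus becomes $\HO^{s-4}_{\BoC^*}(\BoA^2)=\HO^{s-4}_{\BoC^*}(\pt)$, by contracting $\BoA^2$ equivariantly to the fixed point. With $s=(2-2k)-(2-2m)-(2-2n)=2m+2n-2k-2$, we get $\HO^{s+2-4}$. Redoing the shift bookkeeping: the source has $\BoQ[4-2m-2n]$ and the target $\BoQ[2-2k]$, so the Hom is $\HO^{(2-2k)-(4-2m-2n)+4-4\cdot?}$. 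To be careful, one pulls the target back and gets $\Hom(\BoQ[4-2m-2n],\jmath_*\BoQ_{\BoA^2}[2-2k-4])=\HO^{2m+2n-2k-2}_{\BoC^*}(\pt)$, which is the claimed degree.

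\emph{Second isomorphism.} This is identical, with $\delta_d\colon\BoA^1\to\CM^{\SN}_{d+e}$. The fibre product is the diagonal $\BoA^1\subset\BoA^1\times\BoA^1$, of codimension $1$, so purity contributes $[-2]$. The result is $\HO^{-2m-2n\cdots}$, and with shifts source $[-2m-2n]$ and target $[-2k-2]$ this gives $\HO^{2(m+n-k)-2}$. That is off by $2$ from the statement, so I will check whether $\CM^{\SN}$ should be replaced by the ambient $C(\mathfrak{gl})$, or how the $\imath_{\SN}^!$ shifts enter, and match conventions there; this bookkeeping is the main risk.

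\emph{Third isomorphism.} On a point everything is immediate: the Hom is $\HO^{(-2-2k)-(-4-2m-2n)}_{\BoC^*}(\pt)=\HO^{2(m+n-k+1)}_{\BoC^*}(\pt)$.

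Finally, identify $\HO^{2j}_{\BoC^*}(\pt)=\BoQ\hbar^j$ for $j\geq0$ and $0$ otherwise; this gives the case distinctions. I expect the main obstacle to be the precise shift conventions, together with justifying the set-theoretic fibre product computation in the equivariant derived category. For the latter, it is enough to note that $!$-restriction depends only on the reduced structure.
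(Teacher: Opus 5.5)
There is a genuine gap. Your base-change step is misapplied, and this, not a convention issue, is the source of your shift trouble. Consider the Cartesian square with top arrow the diagonal $j\colon\BoA^2\to\BoA^2\times\BoA^2$, right arrow $g=\oplus\circ(\Delta_d\times\Delta_e)$, bottom arrow $\Delta_{d+e}$ and left arrow $\id_{\BoA^2}$. Base change gives $g^!\Delta_{d+e,*}\cong j_*\,\id^!$, hence $g^!\Delta_{d+e,*}\BoQ_{\BoA^2}[b]\cong j_*\BoQ_{\BoA^2}[b]$ with no shift at all. The complex $j_*j^!\BoQ_{\BoA^2\times\BoA^2}$ that you write is the local cohomology of the constant sheaf of $\BoA^2\times\BoA^2$ along the diagonal. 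It is not $g^!\Delta_{d+e,*}\BoQ_{\BoA^2}$, and purity does not enter. Once this is corrected, the adjunction $j^*\dashv j_*$ gives
\[
\Hom(\BoQ_{\BoA^2\times\BoA^2}[a],j_*\BoQ_{\BoA^2}[b])\cong\Hom(\BoQ_{\BoA^2}[a],\BoQ_{\BoA^2}[b])\cong\HO^{b-a}_{\BoC^*}(\pt).
\]
Taking $(a,b)=(4-2m-2n,2-2k)$ yields exactly $2(m+n-k-1)$. Taking $(a,b)=(-2m-2n,-2k)$ on $\CM^{\SN}_{d+e}$, with the diagonal $\BoA^1\subset\BoA^1\times\BoA^1$, yields exactly $2(m+n-k)$. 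This is the paper's argument.

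As written, your first computation reaches the stated degree only through an arithmetic slip. The group $\Hom(\BoQ[4-2m-2n],j_*\BoQ_{\BoA^2}[-2-2k])$ is $\HO^{2(m+n-k-3)}_{\BoC^*}(\pt)$, not $\HO^{2(m+n-k-1)}_{\BoC^*}(\pt)$, so your spurious $[-4]$ puts it off by $4$. In the second computation, the spurious $[-2]$ produces precisely the discrepancy of $2$ you noticed. Nothing is fixed by passing from $\CM^{\SN}_{d+e}$ to $C(\mathfrak{gl})$ or by tracking shifts of $\imath_{\SN}^!$, and your proposal leaves this case unproved. Your third computation on a point is correct, and so is your remark that the reduced fibre product suffices.
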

\begin{proof}
The three isomorphisms can be proven in the exact same way. We only prove the first one. Let
\[
g_{d,e}\colon \BoA^2\times \BoA^2\longrightarrow C(\mathfrak{gl}_{d+e})
\]
be the morphism sending a pair of points $((p,q),(r,s))$ to the pair of commuting matrices $(\diag(pI_d,rI_e),\diag(qI_d,sI_e))$. This is a finite morphism since it is the composition of the closed immersion $\Delta_d\boxtimes\Delta_e \colon\BoA^2\times\BoA^2\rightarrow C(\mathfrak{gl}_d)\times C(\mathfrak{gl}_e)$ with the direct sum morphism $\oplus\colon C(\mathfrak{gl}_d)\times C(\mathfrak{gl}_e)\rightarrow C(\mathfrak{gl}_{d+e})$, which is finite. Therefore, $g_{d,e}$ is proper and so the functors $g_{d,e,*}$ and $g_{d,e,!}$ coincide. Then
\[
\Delta_{d,*}\BoQ_{\BoA^2}[2-2m]\boxdot
\Delta_{e,*}\BoQ_{\BoA^2}[2-2n]
\simeq
g_{d,e,*}\BoQ_{\BoA^2\times\BoA^2}[4-2m-2n].
\]
The inverse image of the small diagonal
\[
\Delta_{d+e}(\BoA^2)\subset C(\mathfrak{gl}_{d+e})
\]
under $g_{d,e}$ is the diagonal copy of $\BoA^2$ in
$\BoA^2\times\BoA^2$. We denote by
\[
j\colon \BoA^2\hookrightarrow \BoA^2\times\BoA^2
\]
this diagonal embedding. Thus we have a Cartesian diagram
\[\begin{tikzcd}
	{\BoA^2} & {\BoA^2\times\BoA^2} \\
	{\BoA^2} & {C(\mathfrak{gl}_{d+e})}
	\arrow["j", from=1-1, to=1-2]
	\arrow["\id"', from=1-1, to=2-1]
	\arrow["\lrcorner"{anchor=center, pos=0.125}, draw=none, from=1-1, to=2-2]
	\arrow["{g_{d,e}}", from=1-2, to=2-2]
	\arrow["{\Delta_{d+e}}"', from=2-1, to=2-2]
\end{tikzcd}\]
Set $a=4-2m-2n$ and $b=2-2k$.

By adjunction $g_{d,e,*}\cong g_{d,e,!}\dashv g_{d,e}^!$, base change for the above Cartesian square, and the adjunction $j^*\dashv j_*$, we obtain
\begin{align*}
&\Hom_{\CD_{\rmc,\BoC^*}^{\rmb}(C(\mathfrak{gl}_{d+e}))}
\left(
g_{d,e,*}\BoQ_{\BoA^2\times\BoA^2}[a],
\Delta_{d+e,*}\BoQ_{\BoA^2}[b]
\right) \\
&\cong
\Hom_{\CD_{\rmc,\BoC^*}^{\rmb}(\BoA^2\times\BoA^2)}
\left(
\BoQ_{\BoA^2\times\BoA^2}[a],
g_{d,e}^!\Delta_{d+e,*}\BoQ_{\BoA^2}[b]
\right) \\
&\cong
\Hom_{\CD_{\rmc,\BoC^*}^{\rmb}(\BoA^2\times\BoA^2)}
\left(
\BoQ_{\BoA^2\times\BoA^2}[a],
j_*\BoQ_{\BoA^2}[b]
\right) \\
&\cong
\Hom_{\CD_{\rmc,\BoC^*}^{\rmb}(\BoA^2)}
\left(
j^*\BoQ_{\BoA^2\times\BoA^2}[a],
\BoQ_{\BoA^2}[b]
\right) \\
&\cong
\Hom_{\CD_{\rmc,\BoC^*}^{\rmb}(\BoA^2)}
\left(
\BoQ_{\BoA^2}[a],
\BoQ_{\BoA^2}[b]
\right).
\end{align*}
In the first and second isomorphisms, $\BoC^*$ acts diagonally on $\BoA^2\times\BoA^2$. Therefore the required Hom group is
\[
\HO^{b-a}_{\BoC^*}(\BoA^2)
=
\HO^{2(m+n-k-1)}_{\BoC^*}(\BoA^2).
\]
Since $\BoA^2$ is $\BoC^*$-equivariantly contractible and
\[
\HO^*_{\BoC^*}(\pt)\cong \BoQ[\hbar],
\qquad
\deg(\hbar)=2,
\]
we have
\[
\HO^{2(m+n-k-1)}_{\BoC^*}(\BoA^2)
\cong
\begin{cases}
\BoQ\cdot\hbar^{m+n-k-1}, & \text{if } k\leq m+n-1,\\
0, & \text{otherwise.}
\end{cases}
\]
This proves the first identification.
\end{proof}

\begin{corollary}
The $\BoC^*$-equivariant sheafified Lie bracket on $\BPS_{\BoA^2}\otimes\HO^*_{\BoC^*}$ is determined by the $\BoC^*$-equivariant absolute Lie bracket on $\hat{\Fg}_{\BoA^2}$.
\end{corollary}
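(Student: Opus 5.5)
The plan is to show that taking derived global sections gives an injective map from the space of sheafified brackets to the space of absolute brackets, component by component. Decompose $\BPS_{\BoA^2}\otimes\HO^*_{\BoC^*}(\pt)\cong\bigoplus_{d\geq1,m\geq0}\Delta_{d,*}\BoQ_{\BoA^2}[2-2m]$, where the factor $[2]$ sits inside $\BPS$. The sheafified bracket, meaning the bracket followed by the projection parallel to $E$, is then a sum of components
\[
\phi_{d,e}^{m,n,k}\colon \Delta_{d,*}\BoQ_{\BoA^2}[2-2m]\boxdot\Delta_{e,*}\BoQ_{\BoA^2}[2-2n]\longrightarrow\Delta_{d+e,*}\BoQ_{\BoA^2}[2-2k].
\]
By the first isomorphism of \Cref{lemma:identifications}, each component is a scalar multiple $c\,\hbar^{m+n-k-1}$ of an explicit generator $\gamma$, and it vanishes unless $k\leq m+n-1$. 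So it suffices to show that $c$ can be read off from the absolute bracket.

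For this, apply $\HO^*_{\BoC^*}$. We have $\HO^*_{\BoC^*}(\Delta_{d,*}\BoQ_{\BoA^2}[2-2m])\cong\BoQ[\hbar]$, with its generator $1_d u^m$ in degree $2m-2$. The source gives $\HO^*_{\BoC^*}(g_{d,e,*}\BoQ_{\BoA^2\times\BoA^2})[\dots]\cong\BoQ[\hbar]\cdot 1_d u^m\otimes 1_e u^n$, using equivariant contractibility of $\BoA^2\times\BoA^2$ and the Künneth formula over $\BoQ[\hbar]$. The key computation is that $\HO^*_{\BoC^*}(\gamma)$ maps the generator $1_du^m\otimes1_eu^n$ to $\hbar^{m+n-k-1}\cdot 1_{d+e}u^k$, up to a nonzero scalar. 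This follows by tracing the adjunctions in the proof of \Cref{lemma:identifications}. The morphism $\gamma$ is the composite of the unit $\BoQ_{\BoA^2\times\BoA^2}\to j_*j^*\BoQ$ (restriction to the diagonal) with multiplication by $\hbar^{m+n-k-1}$. On equivariant cohomology of contractible spaces, restriction to the diagonal is the identity on $\BoQ[\hbar]$, and $\hbar$ acts injectively.

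Hence the absolute bracket of $1_du^m$ and $1_eu^n$ has component $c\,\hbar^{m+n-k-1}1_{d+e}u^k$ along $1_{d+e}u^k$. Since $\HO^*_{\BoC^*}(\BPS\otimes\HO^*_{\BoC^*})$ is free over $\BoQ[\hbar]$ with basis $\{1_du^k\}$, the coefficient $c$ is uniquely determined by the absolute bracket. This bracket is well defined on $\hat{\Fg}_{\BoA^2}$, independently of $E$, by the discussion in \Cref{section:CoHA-A2}, which works verbatim equivariantly thanks to the coproduct. By $\BoQ[\hbar]$-linearity this recovers every component $\phi_{d,e}^{m,n,k}$, and hence the whole sheafified bracket.

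I expect the main obstacle to be the key computation: checking carefully that the explicit generator of the Hom group from \Cref{lemma:identifications} induces a nonzero, and in fact injective, map on equivariant global sections, and not one that, for example, vanishes after restriction. I would first verify this in the case $m+n-k-1=0$, where $\gamma$ is just the adjunction unit followed by base change. The higher cases then follow by multiplying by $\hbar^{m+n-k-1}$.
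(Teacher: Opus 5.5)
Your proposal is correct and follows essentially the same route as the paper: you split the sheafified bracket into components $\Delta_{d,*}\BoQ_{\BoA^2}[2-2m]\boxdot\Delta_{e,*}\BoQ_{\BoA^2}[2-2n]\to\Delta_{d+e,*}\BoQ_{\BoA^2}[2-2k]$, use the one-dimensionality of the equivariant Hom spaces to reduce each component to a single scalar, and recover that scalar from the bracket on global sections because multiplication by $\hbar^{m+n-k-1}$ is injective on the free $\BoQ[\hbar]$-module. Your tracing of the adjunctions shows that the generator induces restriction to the diagonal (an isomorphism on $\BoQ[\hbar]$) followed by multiplication by $\hbar^{m+n-k-1}$; this supplies the justification the paper only asserts, so the obstacle you anticipated does not actually arise.
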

\begin{proof}
By \Cref{lemma:identifications}, every component of the sheafified Lie bracket
\[
 \left(\BPS_{\BoA^2}\otimes \HO^*_{\BoC^*}(\pt)
 \right)\boxdot
 \left(\BPS_{\BoA^2}\otimes \HO^*_{\BoC^*}(\pt)\right)
 \longrightarrow
 \BPS_{\BoA^2}\otimes \HO^*_{\BoC^*}(\pt)
\]
is a morphism of the form
\[
 \Delta_{d,*}\BoQ_{\BoA^2}[2-2m]\boxdot
 \Delta_{e,*}\BoQ_{\BoA^2}[2-2n]
 \longrightarrow
 \Delta_{d+e,*}\BoQ_{\BoA^2}[2-2k]
\]
for some $m,n,k\in\BoN$. By \Cref{lemma:identifications}, the corresponding equivariant Hom space is either zero or one-dimensional
over $\BoQ$. More precisely, when $k\leq m+n-1$, it is generated by
\[
 \hbar^{m+n-k-1},
\]
and it is zero otherwise. Hence each component of the sheafified bracket is
determined by a single scalar.

Taking $\BoC^*$-equivariant global sections sends this generator to
multiplication by the same class
\[
 \hbar^{m+n-k-1}
\]
on equivariant cohomology, that is $\HO^{2-2m+2-2n}_{\BoC^*}(\BoA^2\times\BoA^2)\cong\BoQ[\hbar]\rightarrow\HO^{2-2k}_{\BoC^*}(\BoA^2)\cong\BoQ[\hbar]$. Since
\[
 \HO^*_{\BoC^*}(\pt)\cong \BoQ[\hbar]
\]
is an integral domain, multiplication by a nonzero power of $\hbar$ is
nonzero. Therefore a component of the sheafified bracket vanishes if and
only if the induced map on equivariant global sections vanishes. Moreover,
the scalar coefficient of each component is recovered from the induced bracket on global sections.

Thus the $\BoC^*$-equivariant sheafified Lie bracket is completely
determined by the induced $\BoC^*$-equivariant Lie bracket on equivariant
global sections, namely on the affinized BPS Lie algebra
\[
 \hat{\Fg}_{\BoA^2}^{\BoC^*}
 =
 \HO^*_{\BoC^*}
 \left(
 \BPS_{\BoA^2}\otimes \HO^*_{\BoC^*}(\pt)
 \right).
\]
\end{proof}

Now that we have identified the $\BoC^*$-equivariant sheafified Lie bracket on $\BPS_{\BoA^2}\otimes\HO^*(\pt)$, we only need to understand the effect of the restrictions $\imath_{\nil}^!$ and $\imath_{\SN}^!$ on the components of the Lie bracket.

\begin{lemma}
\label{lemma:restriction-euler-classes}
 Via the identifications in \Cref{lemma:identifications}, for any $m,n,k\in\BoN$, the morphism
 \begin{multline}
 \label{first}
  \imath_{\SN}^!\colon\Hom_{\CD_{\rmc,\BoC^*}^{\rmb}(C(\mathfrak{gl}_{d+e}))}(\Delta_{d,*}\BoQ_{\BoA^2}[2-2m]\boxdot\Delta_{e,*}\BoQ_{\BoA^2}[2-2n],\Delta_{d+e,*}\BoQ_{\BoA^2}[2-2k])\rightarrow\\\Hom_{\CD_{\rmc,\BoC^*}^{\rmb}(\CM_{d+e}^{\SN})}(\delta_{d,*}\BoQ_{\BoA^1}[-2m]\boxdot\delta_{e,*}\BoQ_{\BoA^1}[-2n],\delta_{d+e,*}\BoQ_{\BoA^1}[-2k])
 \end{multline}
is given by the multiplication by $-\hbar$ (the $\BoC^*$-equivariant Euler class of the normal bundle of $\BoA^1\subset\BoA^2$) and the morphism
\begin{multline}
  \imath_{\nil}^!\colon\Hom_{\CD_{\rmc,\BoC^*}^{\rmb}(C(\mathfrak{gl}_{d+e}))}(\Delta_{d,*}\BoQ_{\BoA^2}[2-2m]\boxdot\Delta_{e,*}\BoQ_{\BoA^2}[2-2n],\Delta_{d+e,*}\BoQ_{\BoA^2}[2-2k])\rightarrow\\
  \Hom_{\CD^{\rmb}_{\rmc,\BoC^*}}
\left(
\BoQ_{\{0_d\}}[-2-2m]\otimes
\BoQ_{\{0_e\}}[-2-2n],
\BoQ_{\{0_{d+e}\}}[-2-2k]
\right)
 \end{multline}
is given by the multiplication by $-\hbar^2$ (the $\BoC^*$-equivariant Euler class of the normal bundle of $\{0\}\subset \BoA^2$).
\end{lemma}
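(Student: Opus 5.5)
The plan is to reduce both statements to a base-change computation on the diagonal, run in parallel with the proof of \Cref{lemma:identifications}. Every Hom space in play was identified there with $\HO^*_{\BoC^*}$ of the diagonal $\BoA^2$ (respectively $\BoA^1$, respectively a point). So it suffices to track what $\imath_{\SN}^!$ (resp. $\imath_{\nil}^!$) does to a morphism
\[
\phi\colon g_{d,e,*}\BoQ_{\BoA^2\times\BoA^2}[a]\rightarrow \Delta_{d+e,*}\BoQ_{\BoA^2}[b]
\]
that corresponds to a class $c\in \HO^{b-a}_{\BoC^*}(\BoA^2)$. Concretely, $\phi$ is the composite of the adjunction unit $\BoQ_{\BoA^2\times\BoA^2}\rightarrow j_*j^*\BoQ$ (pushed forward by $g_{d,e}$) with multiplication by $c$ on $\BoQ_{\BoA^2}$.

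The first step is base change. The preimage under $\imath_{\SN}$ of the image of $g_{d,e}$ is the image of $\BoA^1\times\BoA^1\subset\BoA^2\times\BoA^2$, and the preimage of $\Delta_{d+e}(\BoA^2)$ is $\delta_{d+e}(\BoA^1)$. Proper base change then gives
\[
\imath_{\SN}^!g_{d,e,*}\BoQ_{\BoA^2\times\BoA^2}\cong g'_{d,e,*}\imath'^!\BoQ_{\BoA^2\times\BoA^2}\cong g'_{d,e,*}\BoQ_{\BoA^1\times\BoA^1}[-4]
\]
and
\[
\imath_{\SN}^!\Delta_{d+e,*}\BoQ_{\BoA^2}\cong\delta_{d+e,*}\BoQ_{\BoA^1}[-2].
\]
Here the purity isomorphisms $\imath'^!\BoQ\cong\BoQ[-2\mathrm{codim}]$ are the equivariant Thom isomorphisms, which are canonical once the normal bundles are oriented. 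The analogous statements hold for the point $0$, with codimensions $4$ and $2$.

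The second step is to compute the restricted morphism. Applying $\imath^!$ to $\phi$ gives a morphism that, under the same adjunction chain as in \Cref{lemma:identifications}, corresponds to a class in $\HO^*_{\BoC^*}$ of the diagonal $\BoA^1$ (or of the point). The key observation is that $\imath^!$ of the unit $\BoQ_{\BoA^2\times\BoA^2}\rightarrow j_*\BoQ_{\BoA^2}$ is the composite of two maps:
\begin{itemize}
\item the unit $\BoQ_{\BoA^1\times\BoA^1}[-4]\rightarrow j'_*\BoQ_{\BoA^1}[-4]$;
\item the map $j'_*\BoQ_{\BoA^1}[-4]\rightarrow j'_*\BoQ_{\BoA^1}[-2]$ induced by comparing Thom classes.
\end{itemize}
The normal bundle of $\BoA^1\times\BoA^1$ in $\BoA^2\times\BoA^2$, restricted to the diagonal, is $N\oplus N$, where $N$ is the normal bundle of $\BoA^1$ in $\BoA^2$. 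The target side only sees a single $N$. By excess intersection (the clean intersection formula for $\imath^!$ of a pushforward along a non-transversal Cartesian square), the comparison map is multiplication by the Euler class of the excess bundle, namely $e_{\BoC^*}(N)$. The vertical axis has weight $-1$, so this class is $-\hbar$, which gives the claim for $\imath_{\SN}^!$. For the point, the excess bundle is $T_0\BoA^2$, with weights $1$ and $-1$, so its Euler class is $(\hbar)(-\hbar)=-\hbar^2$. The degree shifts match, since $c$ moves from degree $2(m+n-k-1)$ to $2(m+n-k)$, respectively $2(m+n-k+1)$.

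The main obstacle is making the excess intersection step rigorous at the level of sheaf morphisms rather than just on global sections. This includes keeping the orientation and sign conventions of the Thom isomorphisms consistent, so that the result is exactly $-\hbar$ and not $\hbar$. I would first try to handle this by reducing to global sections. By \Cref{lemma:identifications} and the corollary above, all the Hom spaces are free of rank $\leq 1$ and faithfully detected by equivariant global sections. There, the statement becomes the classical identity $\iota^*\iota_*=e(N)\cdot$ for the zero-section pullback, equivalently the self-intersection formula, applied to the diagonal class.
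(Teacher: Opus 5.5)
Your proposal is correct and follows essentially the same route as the paper: base change through the Cartesian square formed by the diagonals $j$, $j'$ and the inclusions $\imath$, $\imath\times\imath$, followed by the equivariant excess intersection formula. This yields multiplication by $e_{\BoC^*}(\BoC_{-1})=-\hbar$ in the seminilpotent case and by $e_{\BoC^*}(T_0\BoA^2)=-\hbar^2$ in the fully nilpotent case, and your explicit identification of the excess bundle as $(N\oplus N)/N\cong N$ (resp.\ $T_0\BoA^2$) is just a more careful form of the paper's appeal to ``the Euler class of the normal bundle of $\imath$''.
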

\begin{proof}
By \Cref{lemma:identifications}, the source Hom space in both morphisms is
generated by $\hbar^{m+n-k-1}$.

We first consider the seminilpotent restriction
\[
 \imath_{\SN}\colon \BoA^1\hookrightarrow \BoA^2,
\]
where $\BoA^1$ is the horizontal axis. Its normal bundle is the normal line
given by the second coordinate. Since the $\BoC^*$-action has weight $-1$ on
the second coordinate, this normal line is the one-dimensional representation
$\BoC_{-1}$. Hence
\[
 e_{\BoC^*}(\BoC_{-1})=-\hbar.
\]
The functor $\imath_{\SN}^!$ is the equivariant Gysin restriction. Under the
identifications of \Cref{lemma:identifications} (see also the proof), this Gysin restriction in \Cref{first} is identified with
\begin{equation}
\label{equation_sn_restriction}
 \Hom_{\CD^{\rmb}_{\rmc,\BoC^*}(\BoA^2\times\BoA^2)}(\BoQ_{\BoA^2\times\BoA^2}[a],j_*\BoQ_{\BoA^2}[b])\rightarrow \Hom_{\CD^{\rmb}_{\rmc,\BoC^*}(\BoA^1\times\BoA^1)}(\BoQ_{\BoA^1\times\BoA^1}[-4+a],j'_*\BoQ_{\BoA^1}[-2+b])
\end{equation}
where $j\colon\BoA^2\rightarrow\BoA^2\times\BoA^2$ and $j'\colon\BoA^1\rightarrow\BoA^1\times\BoA^1$ are the diagonal embeddings. We have a Cartesian square
\[\begin{tikzcd}
	{\BoA^2} & {\BoA^2\times\BoA^2} \\
	{\BoA^1} & {\BoA^1\times\BoA^1}
	\arrow["j", from=1-1, to=1-2]
	\arrow["\imath", from=2-1, to=1-1]
	\arrow["\lrcorner"{anchor=center, pos=0.125, rotate=90}, draw=none, from=2-1, to=1-2]
	\arrow["{j'}"', from=2-1, to=2-2]
	\arrow["{\imath\times\imath}"', from=2-2, to=1-2]
\end{tikzcd}\]
and the equivariant Euler class of the normal bundle of $\imath$ is $-\hbar$.
Therefore, by the equivariant excess intersection formula, the morphism in \Cref{equation_sn_restriction} is given by the multiplication by $-\hbar$. Thus $\imath_{\SN}^!$ is given by multiplication by $-\hbar$.

We now consider the fully nilpotent restriction
\[
 \imath_{\nil}\colon \{0\}\hookrightarrow \BoA^2.
\]
Its normal bundle is the tangent representation at the origin,
\[
 T_0\BoA^2\cong \BoC_1\oplus \BoC_{-1}.
\]
Therefore its equivariant Euler class is
\[
 e_{\BoC^*}(T_0\BoA^2)
 =
 e_{\BoC^*}(\BoC_1)e_{\BoC^*}(\BoC_{-1})
 =
 \hbar(-\hbar)
 =
 -\hbar^2.
\]
Hence, under the identifications of
\Cref{lemma:identifications}, the Gysin restriction sends
\[
 \hbar^{m+n-k-1}
\]
to
\[
 -\hbar^2\cdot \hbar^{m+n-k-1}
 =
 -\hbar^{m+n-k+1}.
\]
Thus $\imath_{\nil}^!$ is given by multiplication by $-\hbar^2$.
\end{proof}

\begin{proof}[Proof of \Cref{theorem:equivariant_cohas}]
The nilpotent $\BoC^*$-equivariant CoHAs $\CA_{\BoA^2}^{\BoC^*,\nil}$ and $\CA_{\BoA^2}^{\BoC^*,\SN}$ admit coproducts constructed as in the non-equivariant case. The reason is that one can extend the $\BoC^*$-action to the triple Jordan quiver (by acting trivially on the extra loop) so that the canonical cubic potential is invariant. The fact that the action on the extra loop is trivial implies that one can take open balls as in the construction of the coproduct, see \cite{jindal2024coha,hennecart2026degenerations,davison2026hecke}. Therefore, the corresponding affinized BPS Lie algebras $\hat{\Fg}_{\BoA^2}^{\BoC^*,\nil}$ and $\hat{\Fg}_{\BoA^2}^{\BoC^*,\SN}$ can be defined as the Lie algebras of primitive elements. Then, by the Milnor--Moore theorem, there are isomorphisms $\CA_{\BoA^2}^{\BoC^*,\nil}\cong\mathbf{U}_{\BoQ[t]}(\hat{\Fg}_{\BoA^2}^{\BoC^*,\nil})$ and $\CA_{\BoA^2}^{\BoC^*,\SN}\cong\mathbf{U}_{\BoQ[t]}(\hat{\Fg}_{\BoA^2}^{\BoC^*,\SN})$ and it suffices to describe the nilpotent affinized BPS Lie algebras. In this context of $\BoQ[t]$-algebras, the Milnor--Moore is explained in \Cref{section:milnor-moore-rings}, using that both $\CA_{\BoA^2}^{\BoC^*,\nil}$ and $\CA_{\BoA^2}^{\BoC^*,\SN}$ are free $\HO^*_{\BoC^*}(\pt)\cong\BoQ[t]$-modules and are generated by $P^{\nil}=\mathfrak{g}_{\BoA^2}^{\BoC^*,\nil}\otimes\HO^*_{\BoC^*}(\pt)$ and $P^{\SN}=\mathfrak{g}_{\BoA^2}^{\BoC^*,\SN}\otimes\HO^*_{\BoC^*}(\pt)$ respectively, by the respective cohomological integrality isomorphisms for $\CA_{\BoA^2}^{\BoC^*,\nil}$ and $\CA_{\BoA^2}^{\BoC^*,\SN}$ (which follow by $\imath_{\nil}^!$ or $\imath_{\SN}^!$-restrictions from the cohomological integrality isomorphism for $\SA_{\BoA^2}$ at the sheaf level).

It remains to identify the $\BoC^*$-equivariant affinized BPS Lie algebras. We write
\[
 \HO^*_{\BoC^*}(\pt)=\BoQ[\hbar],
\]
and we identify the Rees parameter $t$ with $\hbar$.

For $d\geq 1$ and $a\geq 0$, let $p_{d,a}^{\SN}$ be the generator of the seminilpotent $\BoC^*$-equivariant affinized BPS Lie algebra corresponding to the summand $\delta_{d,*}\BoQ_{\BoA^1}[-2a]$
of $\imath^!_{\SN}\BPS_{\BoA^2}$ and let $p_{d,a}^{\nil}$ be the generator of the fully nilpotent $\BoC^*$-equivariant affinized BPS Lie algebra corresponding to the summand $\BoQ_{\{0_d\}}[-2-2a]$ of $\imath^!_{\nil}\BPS_{\BoA^2}$. Similarly, let $p_{d,a}$ denote the corresponding generator of the non-nilpotent $\BoC^*$-equivariant affinized BPS Lie algebra corresponding to the summand $\BoQ_{\BoA^2}[2-2a]$ of $\BPS_{\BoA^2}$.

By Davison's description of the non-nilpotent $\BoC^*$-equivariant affinized BPS Lie algebra \cite[Theorem~C]{davison2022affine} in terms of a Rees Lie algebra associated with a filtration on $W_{1+\infty}^+$, we may choose the generators $p_{d,a}$ so that, if
\[
 [z^dD^a,z^eD^b]
 =
 \sum_c \lambda_c z^{d+e}D^c
\]
in $W^+_{1+\infty}$, then the $\BoC^*$-equivariant bracket satisfies
\[
 [p_{d,a},p_{e,b}]
 =
 \sum_c \lambda_c \hbar^{a+b-c-1}p_{d+e,c}.
\]
Here the sum is finite and $c\leq a+b-1$.

We now apply \Cref{lemma:restriction-euler-classes}. For the seminilpotent restriction, $\imath_{\SN}^!$ acts on the relevant Hom spaces by multiplication by the equivariant Euler class of the normal bundle of
\[
 \BoA^1\subset \BoA^2.
\]
Therefore
\[
 [p_{d,a}^{\SN},p_{e,b}^{\SN}]
 =
 -\sum_c \lambda_c \hbar^{a+b-c}p_{d+e,c}^{\SN}.
\]

Define a $\BoQ[t]$-linear map
\[
 \Phi_{\SN}\colon
 \hat{\Fg}^{\BoC^*,\SN}_{\BoA^2}
 \longrightarrow
 \mathbf{R}_{F^{\SN}}[W^+_{1+\infty}]
\]
by
\[
 \Phi_{\SN}\bigl(\hbar^r p_{d,a}^{\SN}\bigr)
 =
 -z^dD^a t^{a+r}.
\]
This is an isomorphism of $\BoQ[t]$-modules, since
\[
 \mathbf{R}_{F^{\SN}}[W^+_{1+\infty}]
 =
 \Span_{\BoQ}
 \left\{
 z^dD^a t^i
 \mid
 d\geq 1,\ a\geq 0,\ i\geq a
 \right\}.
\]
Moreover,
\[
 \Phi_{\SN}
 \left(
 [p_{d,a}^{\SN},p_{e,b}^{\SN}]
 \right)
 =
 \sum_c
 \lambda_c z^{d+e}D^c t^{a+b}.
\]
On the other hand,
\[
 [\Phi_{\SN}(p_{d,a}^{\SN}),\Phi_{\SN}(p_{e,b}^{\SN})]
 =
 [-z^dD^a t^a,-z^eD^b t^b].
\]
The two minus signs cancel, and since $t$ is central, this is equal to
\[
 [z^dD^a,z^eD^b]t^{a+b}
 =
 \sum_c
 \lambda_c z^{d+e}D^c t^{a+b}.
\]
Thus $\Phi_{\SN}$ is an isomorphism of Lie algebras.

For the fully nilpotent restriction, $\imath_{\nil}^!$ acts by multiplication by the equivariant Euler class of the normal bundle of
\[
 \{0\}\subset \BoA^2.
\]
which is $-\hbar^2$. Hence
\[
 [p_{d,a}^{\nil},p_{e,b}^{\nil}]
 =-\sum_c
 \lambda_c \hbar^{a+b-c+1}p_{d+e,c}^{\nil}.
\]

Define a $\BoQ[t]$-linear map
\[
 \Phi_{\nil}\colon
 \hat{\Fg}^{\BoC^*,\nil}_{\BoA^2}
 \longrightarrow
 \mathbf{R}_{F^{\nil}}[W^+_{1+\infty}]
\]
by
\[
 \Phi_{\nil}\bigl(\hbar^r p_{d,a}^{\nil}\bigr)
 =
 -z^dD^a t^{a+1+r}.
\]
This is an isomorphism of $\BoQ[t]$-modules, since
\[
 \mathbf{R}_{F^{\nil}}[W^+_{1+\infty}]
 =
 \Span_{\BoQ}
 \left\{
 z^dD^a t^i
 \mid
 d\geq 1,\ a\geq 0,\ i\geq a+1
 \right\}.
\]
Moreover,
\[
 \Phi_{\nil}
 \left(
 [p_{d,a}^{\nil},p_{e,b}^{\nil}]
 \right)
 =
 \sum_c
 \lambda_c z^{d+e}D^c t^{a+b+2}.
\]
On the other hand,
\[
 [\Phi_{\nil}(p_{d,a}^{\nil}),\Phi_{\nil}(p_{e,b}^{\nil})]
 =
 [-z^dD^a t^{a+1},-z^eD^b t^{b+1}].
\]
Since $t$ is central, this is equal to
\[
 [z^dD^a,z^eD^b]t^{a+b+2}
 =
 \sum_c
 \lambda_c z^{d+e}D^c t^{a+b+2}.
\]
Thus $\Phi_{\nil}$ is an isomorphism of Lie algebras.

We have proved
\[
 \hat{\Fg}^{\BoC^*,\nil}_{\BoA^2}
 \cong
 \mathbf{R}_{F^{\nil}}[W^+_{1+\infty}]
\]
and
\[
 \hat{\Fg}^{\BoC^*,\SN}_{\BoA^2}
 \cong
 \mathbf{R}_{F^{\SN}}[W^+_{1+\infty}].
\]
Together with the Milnor--Moore isomorphisms established above, this gives
\[
 \CA^{\BoC^*,\nil}_{\BoA^2}
 \cong
 \mathbf U_{\BoQ[t]}
 \left(
 \mathbf{R}_{F^{\nil}}[W^+_{1+\infty}]
 \right),
\]
and
\[
 \CA^{\BoC^*,\SN}_{\BoA^2}
 \cong
 \mathbf U_{\BoQ[t]}
 \left(
 \mathbf{R}_{F^{\SN}}[W^+_{1+\infty}]
 \right).
\]
This completes the proof.
\end{proof}



\appendix

\section{Milnor--Moore theorem over rings}
\label{section:milnor-moore-rings}
We recall the following useful lemma \cite[Appendix 2, p.880]{LangAlgebra}.
\begin{lemma}
\label{lemma:submodule_free_is_free}
Let $R$ be a principal ideal domain. Then, any submodule of a free $R$-module is free.
\end{lemma}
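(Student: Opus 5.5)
The statement is the classical fact that submodules of free modules over a principal ideal domain are free. I would prove it by transfinite induction on a well-ordered basis, with no finiteness assumption. Let $F$ be free over $R$ with basis $(e_i)_{i\in I}$. Fix a well-ordering of $I$, and let $M\subset F$ be a submodule. For each $i\in I$ put
\[
F_{<i}\coloneqq\bigoplus_{j<i}Re_j,\qquad F_{\leq i}\coloneqq\bigoplus_{j\leq i}Re_j,
\]
and set $M_{<i}\coloneqq M\cap F_{<i}$ and $M_{\leq i}\coloneqq M\cap F_{\leq i}$.

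The key step uses the coordinate map $\pi_i\colon F_{\leq i}\to R$ that extracts the coefficient of $e_i$. Its restriction to $M_{\leq i}$ has kernel exactly $M_{<i}$. Its image $\pi_i(M_{\leq i})$ is an ideal of $R$, hence principal, say $(a_i)$, because $R$ is a PID.
\begin{itemize}
\item If $a_i=0$, then $M_{\leq i}=M_{<i}$ and we put no new basis element.
\item If $a_i\neq 0$, we choose $m_i\in M_{\leq i}$ with $\pi_i(m_i)=a_i$. Then $M_{\leq i}=M_{<i}\oplus Rm_i$, and $Rm_i\cong R$ because $R$ is a domain and $a_i\neq0$.
\end{itemize}

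Let $J\subset I$ be the set of indices with $a_i\neq0$. I would then show that $(m_i)_{i\in J}$ is a basis of $M$.

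For \emph{spanning}, argue by contradiction. Suppose some element of $M$ is not in the span of the $m_j$. Among all such elements, pick one, $x$, whose largest index $i$ with nonzero coordinate is minimal; every $x\in F$ has such an index since it has finite support. Then $x\in M_{\leq i}$, so $\pi_i(x)=ra_i$ for some $r\in R$, where we take $r=0$ if $a_i=0$. Consequently $x-rm_i\in M_{<i}$ has strictly smaller largest index. By minimality, $x-rm_i$ lies in the span, and hence so does $x$, a contradiction.

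For \emph{linear independence}, take a nontrivial finite relation $\sum r_jm_j=0$ and let $i$ be the largest index occurring with $r_i\neq0$. Applying $\pi_i$ kills every other term, since $m_j\in F_{<i}$ for $j<i$. This gives $r_ia_i=0$, and since $R$ is a domain and $a_i\neq0$, we get $r_i=0$, a contradiction.

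The only delicate point is the well-ordering/transfinite argument when $I$ is infinite. This step uses the axiom of choice, which is harmless here. In the paper's application the modules are graded with finite-rank pieces over $\BoQ[t]$, so one could even restrict to the finitely generated case. The argument above, however, works in general and is the one I would write.
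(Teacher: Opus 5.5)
Your proof is correct and is essentially the argument the paper relies on when it cites Lang (Appendix~2). Lang uses the same key step: the ideal of $e_k$-coefficients of elements of $M\cap F_{J\cup\{k\}}$ is principal, and a preimage of a generator extends a basis of $M\cap F_J$. He packages this as a Zorn's lemma argument on pairs (a subset $J\subset I$, a basis of $M\cap F_J$), whereas you use a well-ordering of $I$ and a minimal counterexample, which is equivalent.

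One correction to your closing aside: the dimension-graded pieces of $\CA^{\BoC^*,\nil}_{\BoA^2}$ are free of infinite rank over $\BoQ[t]$. Already the $d=1$ piece is, up to shift, $\BoQ[t]\otimes\BoQ[u]$. So the application does not reduce to the finitely generated case without an extra graded argument, though your general proof makes this irrelevant.
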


We are now ready to prove a version of the Milnor--Moore theorem for algebras over PIDs.
\begin{proposition}
 Let $R$ be a $\BoQ$-algebra that is a PID, $A$ a connected cocommutative $R$-bialgebra that is free as an $R$-module and $P\subset \Prim(A)$ a submodule of the space of primitive elements of $A$ that generates $A$ as an $R$-algebra. Then, $A\cong \mathbf{U}_{R}(\Prim(A))$.
\end{proposition}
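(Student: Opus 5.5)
The plan is to reduce to the classical Milnor--Moore theorem over the fraction field $K\coloneqq\Frac(R)$, using freeness to control torsion. Set $\mathfrak{g}\coloneqq\Prim(A)$. It is an $R$-submodule of $A$ and is closed under the commutator bracket, so it is an $R$-Lie algebra. The inclusion $\mathfrak{g}\subset A$ induces a morphism of $R$-algebras
\[
\phi\colon\mathbf{U}_R(\mathfrak{g})\rightarrow A.
\]
The image of $\phi$ is an $R$-subalgebra containing $P$. Since $P$ generates $A$ as an $R$-algebra, $\phi$ is surjective. It therefore remains to show that $\phi$ is injective.

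\textbf{Step 1: base change to $K$.} Since $A$ is free over $R$, the map $A\rightarrow A_K\coloneqq A\otimes_RK$ is injective. Likewise $A\otimes_RA$ is free, so $A\otimes_RA\rightarrow A_K\otimes_KA_K$ is injective. The bialgebra structure extends $K$-linearly, making $A_K$ a connected cocommutative bialgebra over a field of characteristic $0$. By the classical (graded) Milnor--Moore, or Cartier--Kostant, theorem, we get $A_K\cong\mathbf{U}_K(\Prim(A_K))$. Moreover, $\mathfrak{g}_K\coloneqq\mathfrak{g}\otimes_RK\rightarrow\Prim(A_K)$ is injective, because localization is exact. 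This map is a morphism of $K$-Lie algebras.

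\textbf{Step 2: PBW over $R$.} By \Cref{lemma:submodule_free_is_free}, $\mathfrak{g}$ is a free $R$-module, since it is a submodule of the free module $A$. The PBW theorem holds for Lie algebras that are free as modules over a commutative ring containing $\BoQ$. Hence $\mathbf{U}_R(\mathfrak{g})$ is free over $R$, with basis the ordered monomials in an $R$-basis of $\mathfrak{g}$. In particular $\mathbf{U}_R(\mathfrak{g})$ is torsion-free, so $\mathbf{U}_R(\mathfrak{g})\rightarrow\mathbf{U}_R(\mathfrak{g})\otimes_RK\cong\mathbf{U}_K(\mathfrak{g}_K)$ is injective. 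The last isomorphism holds because enveloping algebras commute with base change.

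\textbf{Step 3: injectivity.} Consider the commutative square whose top row is $\phi$, whose vertical maps are the localizations $\mathbf{U}_R(\mathfrak{g})\hookrightarrow\mathbf{U}_K(\mathfrak{g}_K)$ and $A\hookrightarrow A_K$, and whose bottom row is the composite
\[
\mathbf{U}_K(\mathfrak{g}_K)\rightarrow\mathbf{U}_K(\Prim(A_K))\cong A_K.
\]
The first arrow of this composite is $\mathbf{U}_K$ applied to an injective morphism of Lie algebras over a field, so it is injective by PBW. The second arrow is the Milnor--Moore isomorphism. Hence the path down-then-across is injective, so $\phi$ is injective. Therefore $A\cong\mathbf{U}_R(\Prim(A))$.

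The main point requiring care is Step 2: we need the PBW theorem over $R$ and the resulting torsion-freeness of $\mathbf{U}_R(\mathfrak{g})$. This is exactly where freeness of $A$, and hence of $\mathfrak{g}$ via the PID hypothesis, is used. In the graded setting one should also check that the classical graded Milnor--Moore theorem applies with the relevant Koszul signs. In our applications everything sits in even cohomological degree, so no sign issues arise.
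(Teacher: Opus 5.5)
Your proposal is correct and follows essentially the same route as the paper: freeness of $\Prim(A)$ via the PID lemma, PBW over $R$ to get torsion-freeness of $\mathbf{U}_R(\Prim(A))$, base change to $K=\Frac(R)$, and the classical Milnor--Moore theorem combined with injectivity of $\mathbf{U}_K(\Prim(A)\otimes_RK)\rightarrow\mathbf{U}_K(\Prim(A_K))$. The only cosmetic difference is at the end: you read off injectivity of $\phi$ directly from the injective composite $\mathbf{U}_R(\mathfrak{g})\hookrightarrow\mathbf{U}_K(\mathfrak{g}_K)\hookrightarrow A_K$, whereas the paper first shows $\Phi_K$ is an isomorphism and then argues that $\ker\Phi$ is torsion-free with $\ker\Phi\otimes_RK=0$.
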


\begin{proof}
 It is clear that $\Prim(A)$ is a Lie algebra for the Lie bracket induced by the commutator Lie bracket in $A$. Then, by universality of the enveloping algebra, there is a natural morphism $\Phi\colon\mathbf{U}_R(\Prim(A))\rightarrow A$. Since $P$ generates $A$ as an algebra, so does $\Prim(A)$. Therefore, $\Phi$ is surjective. Since $A$ is a free $R$-module, so is $\Prim(A)$ by \Cref{lemma:submodule_free_is_free}. By the PBW theorem for $\mathbf{U}_R(\Prim(A))$ \cite[Chap. I, \S2, no.7, Theorem 1]{bourbaki1975lie}, since $\Prim(A)$ is a free $R$-module the $R$-module $\mathbf{U}_R(\Prim(A))$ is also free over $R$. In particular, it is torsion free. By tensoring with the fraction field $K=\Frac(R)$, we obtain a surjective morphism $\Phi_K\colon\mathbf{U}_R(\Prim(A))\otimes_R K\rightarrow A\otimes_RK$. Since enveloping algebras commute with extensions of the base-ring \cite[Chap. 1, \S2, no.9]{bourbaki1975lie}, we have $\mathbf{U}_R(\Prim(A))\otimes_RK\cong \mathbf{U}_K(\Prim(A)\otimes_RK)$. Moreover, by freeness of $A$ and $\Prim(A)$, the morphism $\Prim(A)\otimes_RK\rightarrow A_K\coloneqq A\otimes K$ is injective and its image consists only of primitive elements, and so it factors through $\Prim(A\otimes K)$. The morphism $\Phi_K$ therefore factors as
 \[
  \mathbf{U}_K(\Prim(A)\otimes K)\rightarrow\mathbf{U}_K(\Prim(A\otimes K))\rightarrow A_K\,.
 \]
By the Milnor--Moore theorem for $A_K$, the second arrow is an isomorphism. The first arrow is injective, and the composition is $\Phi_K$ and is surjective. This implies that the composition is an isomorphism and also $\Prim(A)\otimes_R K\cong\Prim(A\otimes_RK)$.

Let $L=\ker(\Phi)$. Then, $L\otimes_RK=0$ and $L\subset \mathbf{U}_R(\Prim(A))$ is torsion-free by the PBW theorem. Therefore, $L=0$ and so $\Phi$ is an isomorphism.
\end{proof}

\section{Statements and Declarations}
The author has no competing interests to declare.

\printbibliography
\end{document}